\documentclass[12pt,reqno]{amsart}
\ifx\pdfsuppressptexinfo\undefined\else\pdfsuppressptexinfo=-1\fi
\usepackage{amsmath,amsthm,amssymb}
\usepackage{enumitem}
\usepackage{needspace}
\usepackage[hmargin=1.15in,vmargin=1.15in]{geometry}
\usepackage[colorlinks,linkcolor=blue,citecolor=blue,urlcolor=blue]{hyperref}
\usepackage{bookmark}
\setlist[enumerate,1]{label=(\arabic*),ref=\arabic*,font=\normalfont}
\setlist[enumerate,2]{label=(\alph*),font=\normalfont}
\numberwithin{equation}{section}
\newcommand{\F}{\mathbb F}
\newcommand{\PP}{\mathbb P}
\DeclareMathOperator{\Aut}{Aut}
\DeclareMathOperator{\GL}{GL}
\DeclareMathOperator{\PGL}{PGL}
\theoremstyle{plain}
\newtheorem{theorem}{Theorem}[section]
\newtheorem{proposition}[theorem]{Proposition}
\newtheorem{lemma}[theorem]{Lemma}

\newtheorem*{theoremA}{Theorem~A}
\theoremstyle{definition}

\theoremstyle{remark}
\newtheorem{remark}[theorem]{Remark}

\begin{document}
\title[Maximal curves not covered by the Hermitian curve]
{A new family of maximal curves not covered by the Hermitian curve}
\date{}
\author[Liming Ma]{Liming Ma}
\address{School of Mathematical Sciences, University of Science and Technology of China,
Hefei, Anhui 230026, China}
\email{lmma20@ustc.edu.cn}
\author[Yipeng Wang]{Yipeng Wang}
\address{School of Mathematical Sciences, University of Science and Technology of China,
Hefei, Anhui 230026, China}
\email{jisunxingfu@mail.ustc.edu.cn}
\hypersetup{
  pdftitle={A new family of maximal curves not covered by the Hermitian curve},
  pdfauthor={Liming Ma and Yipeng Wang}}
\begin{abstract}
For every prime power $q>2$ and every even integer $n\ge4$, we
construct an $\F_{q^{2n}}$-maximal curve of genus
$(q^2-1)q^n/2$ that is not covered by the Hermitian curve over
$\F_{q^{2n}}$. The defining equation also gives a Kummer model
for the Beelen--Montanucci curves when $n\ge3$ is odd.
We compute the genus for both odd and even $n$ and give a uniform
proof of maximality by counting rational places.
For $q>2$ and odd $n\ge5$,
we also prove that the Beelen--Montanucci curves are not covered by
the Hermitian curve, extending the known result for Galois coverings.
For every prime power $q$ and $n\ge4$, we also give an explicit
automorphism subgroup of order $(q^n+1)q(q^2-1)$.
\end{abstract}
\maketitle

\section{Introduction}
\label{b1:introduction}

Let \(l\) be a prime power. A nonsingular, geometrically irreducible
projective curve \(\mathcal X\) over \(\F_{l^2}\) is called maximal
if its number of rational points attains the Hasse--Weil upper bound
\(\#\mathcal X(\F_{l^2})=l^2+1+2g(\mathcal X)l\).
The possible genera of maximal curves have been studied extensively;
see, for example, \cite{FGT1997,XS1995}.
An important source of examples is the Hermitian curve
\(\mathcal H_l\), with affine equation \(b^l+b=a^{l+1}\), which is
maximal over \(\F_{l^2}\) and has genus \(l(l-1)/2\).
A theorem of Serre states that a curve covered over \(\F_{l^2}\)
by an \(\F_{l^2}\)-maximal curve is also maximal over \(\F_{l^2}\);
see \cite[p.~230]{GK2009}.
Thus subcovers of \(\mathcal H_l\) provide a systematic way of
constructing maximal curves; see, for example, \cite{GSX2000}.

For many years, it was not known whether every maximal curve over
\(\F_{l^2}\) was covered by \(\mathcal H_l\) over the same field.
Garcia and Stichtenoth constructed an \(\F_{3^6}\)-maximal curve
of genus \(24\) which is not a Galois subcover of \(\mathcal H_{27}\)
over \(\F_{3^6}\) \cite{GS2006}.
Giulietti and Korchm\'aros settled the question for arbitrary coverings
with the GK curve.
For a prime power \(q\), this curve has the affine model
\cite[Remark~2.1]{GGS2010}
\begin{equation}
\label{b1:gk-model}
 x^q+x=y^{q+1},\qquad w^{q^2-q+1}=y^{q^2}-y.
\end{equation}
It is maximal over \(\F_{q^6}\) and, for \(q>2\), is not
covered by \(\mathcal H_{q^3}\) over \(\F_{q^6}\)
\cite[Theorems~1 and~5]{GK2009}.

Further examples of maximal curves not covered by the Hermitian
curve were obtained as subcovers of the GK curve by Tafazolian,
Teher\'an-Herrera, and Torres \cite{TTT2016}.
Recently, Beelen, Montanucci, and Niemann gave the first known
\(\F_{p^2}\)-maximal curve, with \(p\) prime, not covered by
\(\mathcal H_p\) over \(\F_{p^2}\); their example has \(p=11\)
and genus \(17\)
\cite[Theorems~2.3 and~3.1]{BMN2026}.

Garcia, G\"uneri, and Stichtenoth extended the GK construction to
\(\F_{q^{2n}}\), for odd integers \(n\ge3\), by replacing the
exponent \(q^2-q+1\) in \eqref{b1:gk-model} with
\(m=(q^n+1)/(q+1)\) \cite[equation~(2.1)]{GGS2010}.
Beelen and Montanucci constructed a second family over the same
fields, with affine model \cite[equation~(2.1)]{BM2018}
\begin{equation}
\label{b1:bm-model}
 x^{q+1}-y^{q+1}=1,\qquad w^m=\frac{x^{q^2}-x}{y^q}.
\end{equation}
We call these the GGS and BM families, respectively. Both are
maximal over \(\F_{q^{2n}}\) and have the same genus for each odd
\(n\). For \(n=3\), both are \(\F_{q^6}\)-isomorphic to the
GK curve; see \cite[Remark~2.1]{GGS2010} and
\cite[Section~2]{BM2018}.
For odd \(n\ge5\), they are not isomorphic over an algebraic closure
\cite[Corollary~2.6 and Remark~4.5]{BM2018}.

For odd \(n\ge5\), the GGS curves are not Galois subcovers of
\(\mathcal H_{q^n}\) over \(\F_{q^{2n}}\)
\cite{DM2012,GMZ2016}.
For \(q>2\) and odd \(n\ge3\), Beelen and Montanucci proved that the BM curves are
not Galois subcovers of
\(\mathcal H_{q^n}\) over \(\F_{q^{2n}}\)
\cite[Corollary~2.2]{BM2018}.
For odd \(n\ge5\), this result does not exclude non-Galois coverings.
The exponent \(m=(q^n+1)/(q+1)\) in the GGS and BM
constructions is integral only for odd \(n\).

In this paper we use a different Kummer model of the BM curves
which also defines maximal curves for even \(n\ge4\).
Let \(q\) be a prime power, let \(n\ge2\), and put
\(k=\F_{q^{2n}}\). Choose \(c\in k^*\) with \(c^{q^n}=-c\).
Let \(\mathcal X_{q,n}\) be the nonsingular projective curve over
\(k\) with function field \(k(u,z)\) given by
\begin{equation}
\label{b1:intro-model}
 z^{q^n+1}=c(u^q-u)
 \bigl(1+(u^q-u)^{q-1}\bigr)^{\frac{q^{n-1}-1}{q-1}}.
\end{equation}
Its \(k\)-isomorphism class is independent of the choice of \(c\).
For odd \(n\ge3\), an explicit change of generators gives a
\(k\)-isomorphism between \(\mathcal X_{q,n}\) and the
Beelen--Montanucci curve.
For \(n=2\), it is \(k\)-isomorphic to the Hermitian curve
\(\mathcal H_{q^2}\).
Our main results are summarized as follows.

\Needspace{10\baselineskip}
\begin{theoremA}
Let \(q\) be a prime power and let \(n\ge2\).
\begin{enumerate}
\item The curve \(\mathcal X_{q,n}\) is maximal over \(\F_{q^{2n}}\), and
its genus is
\[
 g(\mathcal X_{q,n})=
 \begin{cases}
 \bigl((q^2-1)q^n-q^2(q-1)\bigr)/2,&n\text{ odd},\\
 (q^2-1)q^n/2,&n\text{ even}.
 \end{cases}
\]
\item If \(q>2\) and \(n\ge3\), there is no nonconstant
\(\F_{q^{2n}}\)-morphism \(\mathcal H_{q^n}\to\mathcal X_{q,n}\).
\item If \(n\ge4\), then \(\Aut(\mathcal X_{q,n})\) contains a subgroup
of order \((q^n+1)q(q^2-1)\).
\end{enumerate}
\end{theoremA}

For \(q>2\) and odd \(n\ge5\), the second assertion strengthens
\cite[Corollary~2.2]{BM2018} by excluding arbitrary coverings.
For each fixed \(q>2\), taking \(n=2^{t-1}\) with \(t\ge3\)
gives an infinite family of \(\F_{q^{2^t}}\)-maximal curves
not covered by the Hermitian curve over the same field.

The restriction \(q>2\) is necessary: for \(q=2\), the curve
\(\mathcal X_{2,n}\) is a Galois subcover of \(\mathcal H_{2^n}\)
for every \(n\ge2\); see Remark~\ref{b1:q2-cover}.

We prove maximality by counting rational places that split completely
in \(k(u,z)/k(u)\), using fractional linear transformations over
\(\F_q\) and the norm from \(k\) to \(\F_{q^n}\).
The argument applies to both parities of \(n\) and gives an
independent proof of maximality for the BM curves.

To exclude a covering by the Hermitian curve, we represent the
pullbacks of a sequence of functions by homogeneous polynomials.
A degree bound turns their relations into polynomial identities,
and comparison of the multiplicities of an irreducible factor
gives a contradiction.

This paper is organized as follows. Section~\ref{b1:model-section} gives the change of generators from the
Beelen--Montanucci model and computes the genus.
Section~\ref{b1:maximality-section} proves maximality.
Section~\ref{b1:noncover-section} excludes coverings by the Hermitian
curve, and Section~\ref{b1:automorphisms-section} gives an explicit
subgroup of \(\Aut(\mathcal X_{q,n})\).

\section{A Kummer model and its genus}
\label{b1:model-section}

We retain the notation of the introduction and write \(p\) for the
characteristic of \(k\). We use the notation and terminology of
\cite{Stichtenoth2009} for algebraic function fields.
All isomorphisms and morphisms are over \(k\) unless another field
is specified.

Put \(A(T)=T^q-T\), \(B(T)=1+(T^q-T)^{q-1}\), and
\(r=(q^{n-1}-1)/(q-1)\).
Since \(B(T)=(T^{q^2}-T)/(T^q-T)\), its zeros are precisely the
elements of \(\F_{q^2}\setminus\F_q\), each with multiplicity one.
We write the function field of \(\mathcal X_{q,n}\) as
\begin{equation}
\label{b1:model}
 F=k(u,z),\qquad z^{q^n+1}=cA(u)B(u)^r.
\end{equation}

The right-hand side of \eqref{b1:model} has a simple zero at \(u=0\).
Eisenstein's criterion at \(u=0\) also applies over \(\overline k\).
Hence the defining polynomial is absolutely irreducible and
\([F:k(u)]=q^n+1\).
As \(q^n+1\mid q^{2n}-1\), all \((q^n+1)\)-st roots of unity lie in \(k\),
and \(F/k(u)\) is cyclic of degree \(q^n+1\).

The trace map from \(k\) to \(\F_{q^n}\) has a nonzero element in its
kernel, so the choice of \(c\) is possible.  If \(c'\) is another such
choice, then \(c'/c\in\F_{q^n}^*\).  Surjectivity of the norm
\(k^*\to\F_{q^n}^*\) gives \(\lambda\in k^*\) with
\(\lambda^{q^n+1}=c'/c\), and replacing \(z\) by \(\lambda z\)
gives the model with constant \(c'\).
Thus the \(k\)-isomorphism class of \(\mathcal X_{q,n}\) is independent
of the choice of \(c\).

\subsection{Relation with the Beelen--Montanucci model}

Let \(n\ge3\) be odd, and put \(m=(q^n+1)/(q+1)\).
Mendoza and Quoos gave a plane Kummer model for the BM curves
\cite[Corollary~3.3]{MQ2022}.
We give a change of generators from \eqref{b1:bm-model} to
\eqref{b1:model}, together with its inverse.

\begin{proposition}
\label{b1:bm-identification}
For odd \(n\ge3\), the curve \(\mathcal X_{q,n}\) is
\(\F_{q^{2n}}\)-isomorphic to the Beelen--Montanucci curve.
\end{proposition}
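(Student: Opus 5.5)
The plan is to write down an explicit birational map between the two function fields, compatible with the tower \(k(u)\subset k(x,y)\subset F\). Degrees force this tower: \([F:k(u)]=q^n+1=m(q+1)\), and the BM function field is a cyclic extension of degree \(m\) of the rational field \(k(x,y)\). So I expect \(k(x,y)\) to be the unique intermediate field of degree \(q+1\) over \(k(u)\), namely \(k(u,z^m)\), with \(w\) equal to \(z\) up to a factor in \(k(u,z^m)\).

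\emph{Step 1 (locate the intermediate field).} Put \(v=z^m\), so that \(v^{q+1}=cA(u)B(u)^r\). I compute the valuations of the right-hand side modulo \(q+1\).
\begin{itemize}
\item At each \(u\in\F_q\) the valuation is \(1\).
\item At each zero of \(B\) it is \(r=1+q+\dots+q^{n-2}\). Since \(n-1\) is even, the terms pair off into sums \(1+q\equiv 0\), so \(r\equiv0\pmod{q+1}\).
\item At infinity it is \(-(q+rq(q-1))=-q^n\equiv 1\pmod{q+1}\).
\end{itemize}
Hence \(k(u,v)/k(u)\) is a Kummer extension of degree \(q+1\), totally ramified exactly at the \(q+1\) points of \(\PP^1(\F_q)\) and unramified elsewhere. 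On the BM side, \(k(x,y)/k(x)\) with \(y^{q+1}=x^{q+1}-1\) is totally ramified exactly at the \(q+1\) points \(x^{q+1}=1\), and unramified at \(x=\infty\).

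\emph{Step 2 (match the rational parameters).} I fix \(\alpha\in\F_{q^2}\setminus\F_q\) and set \(t=(u-\alpha)/(u-\alpha^q)\). For \(u\in\PP^1(\F_q)\) we have \(t^q=t^{-1}\), so this Möbius map sends \(\PP^1(\F_q)\) bijectively onto the \((q+1)\)-st roots of unity. I then take \(x=\mu t\) with a suitable constant \(\mu\in k\) and \(\mu^{q+1}=1\) (possibly \(\mu=1\)). Next I rewrite \(cA(u)B(u)^r\) in terms of \(t\), using two identities:
\[
u^q-u=\frac{\text{const}\cdot(t^{q+1}-1)}{(1-t)^{q+1}},\qquad \prod_{\beta\in\F_{q^2}\setminus\F_q}(u-\beta)\;\text{expressed through } t^{q^2}-t .
\]
The aim is to show that \(v\) equals \(y\) times a rational function of \(t\), which identifies \(k(u,v)=k(x,y)\). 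Concretely, I would find the element \(y\in k(u,v)\) as \(v\cdot(\text{function of }u)\), chosen so that \(y^{q+1}=x^{q+1}-1\).

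\emph{Step 3 (match the top).} With \(x,y\) expressed in \(u,v\), I compute \((x^{q^2}-x)/y^q\) as a function of \(u\). The expectation is that
\[
\frac{x^{q^2}-x}{y^q}=z^m\cdot h^{m},\qquad h\in k(u,v),
\]
so that \(w=zh\) satisfies the BM equation. Finally I write down the inverse map \(u=(\alpha-\alpha^qx/\mu)/(1-x/\mu)\) and \(z=w/h\), and check that both compositions are the identity on generators.

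The main obstacle is Step 3: keeping track of the exponent \(r\) and of the factor \(B(u)^r\), and choosing the constants \(c,\mu\) and the power of \((1-t)\) so that everything is an exact \(m\)-th power. Here I would use \(c^{q^n}=-c\) together with \(x^{q^2}-x=x\bigl(x^{q^2-1}-1\bigr)\), and the independence of \(c\) shown above to normalise the constant freely. The key degree bookkeeping is \(m(q+1)=q^n+1\), which lets me absorb \((1-t)^{-(q^n+1)}\) into \(h^m\) and a \((q+1)\)-st power.
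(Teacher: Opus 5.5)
Your plan breaks down in Step~3 for every odd \(n\ge5\), and the cause is the identification \(x=\mu t\in k(u)\) made in Step~2. That step is fine on its own: the isomorphism \(k(u,v)\cong k(x,y)\) carrying \(k(u)\) onto \(k(x)\) exists. It does not extend to the top, however. We have \(F=k(u,v)(z)\) with \(z^m=v\), and the BM field is \(k(x,y)(w)\) with \(w^m=W:=(x^{q^2}-x)/y^q\). Kummer theory would then require \(W=v^eh^m\) for a single \(e\) prime to \(m\) and some \(h\in k(u,v)\); your Step~3 is the case \(e=1\).

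Put \(\ell=r/(q+1)=(q^{n-1}-1)/(q^2-1)\). Under your matching, the places of \(k(u,v)\) over \(\PP^1(\F_q)\) go to the \(q+1\) places with \(y=0\). There \(v\) has valuation \(1\) or \(-q^n\equiv1\pmod m\), and \(W\) has valuation \((q+1)-q=1\). The places over \(u\in\F_{q^2}\setminus\F_q\) go to places over \(x\in\PP^1(\F_{q^2})\) with \(x^{q+1}\ne1\). At those over finite \(x\), \(v\) has valuation \(\ell\) while \(W\) has valuation \(1\).

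Comparing valuations modulo \(m\) gives \(e\equiv1\) and \(e\ell\equiv1\), hence \(\ell\equiv1\pmod m\). But \((q^2-q)\ell=m-1\), so \(1<\ell<m\) once \(n\ge5\). The obstruction is intrinsic. Any isomorphism carrying \(k(u)\) onto \(k(x)\) must match the totally ramified places. Constants and \(m\)-th powers do not change valuations modulo \(m\), so no choice of \(\mu\), \(c\), or powers of \(1-t\) can help. The scheme can succeed only for \(n=3\), where \(\ell=1\).

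The missing idea is that \(k(u)\) must correspond to \(k(x/y)\), not to \(k(x)\). The totally ramified places of \(F/k(u)\) should go to the \(q+1\) places at infinity of \(x^{q+1}-y^{q+1}=1\), where \(W\) has valuation \(-(q^2-q)\equiv\ell^{-1}\pmod m\). The same bookkeeping then forces \(e\equiv\ell^{-1}\). So \(w\) is \(z^{-(q^2-q)}\), not \(z\), times an element of \(k(u,z^m)\), and your opening expectation also fails for \(n\ge5\).

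The paper realizes this correspondence as follows. It rewrites \(W=xy^{q^2}-yx^{q^2}\) and sets \(s=x+y\), \(\tau=\theta x+\theta^qy\), \(c=(\theta-\theta^q)^{-1}\) with \(\theta\in\F_{q^2}\setminus\F_q\). Then \(c(s^q\tau-s\tau^q)=1\) and \(c(s^{q^2}\tau-s\tau^{q^2})=w^m\). With \(u=s/\tau\), these give \(A(u)=1/(c\tau^{q+1})\) and \(B(u)=w^m/\tau^{q^2-q}\). Hence \(z=w^\ell/\tau\), and inversely \(w=B(u)/z^{q^2-q}\).
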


\begin{proof}
Let \(k(x,y,w)\) be the function field of the Beelen--Montanucci
curve given by \eqref{b1:bm-model}.
Choose \(\theta\in\F_{q^2}\setminus\F_q\). Since \(n\) is odd,
\(c=(\theta-\theta^q)^{-1}\) satisfies \(c^{q^n}=-c\).
The \(k\)-isomorphism class of \(\mathcal X_{q,n}\) is independent
of \(c\), so we use this choice in the proof.

The second equation in \eqref{b1:bm-model} can be written as
\(w^m=xy^{q^2}-yx^{q^2}\), since
\[
 y^q(xy^{q^2}-yx^{q^2})
 =x(x^{q+1}-1)^q-x^{q^2}(x^{q+1}-1)
 =x^{q^2}-x.
\]
Put \(s=x+y\) and \(t=\theta x+\theta^qy\). Then
\begin{equation}
\label{b1:bm-st}
 \begin{aligned}
 c(s^qt-st^q)&=x^{q+1}-y^{q+1}=1,\\
 c(s^{q^2}t-st^{q^2})&=xy^{q^2}-yx^{q^2}=w^m.
 \end{aligned}
\end{equation}
Setting \(u=s/t\), equation~\eqref{b1:bm-st} gives
\begin{equation}
\label{b1:bm-ab}
 A(u)=\frac{1}{ct^{q+1}},\qquad
 B(u)=\frac{u^{q^2}-u}{u^q-u}=\frac{w^m}{t^{q^2-q}}.
\end{equation}

Now put \(\ell=r/(q+1)=(q^{n-1}-1)/(q^2-1)\).
Since \(mr=\ell(q^n+1)\) and
\(q+1+(q^2-q)r=q^n+1\), equation~\eqref{b1:bm-ab} gives
\[
 cA(u)B(u)^r
 =\frac{w^{mr}}{t^{q+1+(q^2-q)r}}
 =\left(\frac{w^\ell}{t}\right)^{q^n+1}.
\]
Thus \eqref{b1:model} is satisfied by
\begin{equation}
\label{b1:bm-change}
 u=\frac{x+y}{\theta x+\theta^qy},\qquad
 z=\frac{w^\ell}{\theta x+\theta^qy}.
\end{equation}

\Needspace{10\baselineskip}
To recover the original generators, use
\(m-(q^2-q)\ell=1\) and \eqref{b1:bm-ab} to obtain
\[
 w=\frac{B(u)}{z^{q^2-q}},\qquad t=\frac{w^\ell}{z}.
\]
Then \(s=ut\), and solving for \(x,y\) gives
\[
 x=ct(1-\theta^qu),\qquad y=ct(\theta u-1).
\]
Thus \(k(x,y,w)=k(u,z)\), proving the assertion.
\end{proof}

\begin{remark}
\label{b1:small-members}
For \(n=2\), equation \eqref{b1:model} is
\(z^{q^2+1}=c(u^{q^2}-u)\).  With \(a=z\) and \(b=-cu\), this
becomes the Hermitian equation \(b^{q^2}+b=a^{q^2+1}\).
For \(n=3\), the curve \(\mathcal X_{q,3}\) is \(k\)-isomorphic to
the Giulietti--Korchm\'aros curve; see
Proposition~\ref{b1:bm-identification} and \cite[Section~2]{BM2018}.
\end{remark}

\subsection{Ramification and genus}

We compute the ramification in \(F/k(u)\). For \(a\in k\), let
\(Q_a\) denote the zero of \(u-a\) in \(k(u)\), and let
\(Q_\infty\) denote the pole of \(u\) in \(k(u)\).
The distinction between odd and even \(n\) occurs at the zeros of \(B\). Write
\(\varepsilon=\gcd(r,q^n+1)\).
The identity \(q^n+1=q(q-1)r+q+1\) gives
\(\gcd(r,q^n+1)=\gcd(r,q+1)\).  Reducing
\(r=1+q+\cdots+q^{n-2}\) modulo \(q+1\) gives zero for odd \(n\)
and one for even \(n\). Thus \(\varepsilon=q+1\) when \(n\) is odd,
and \(\varepsilon=1\) when \(n\) is even.

\begin{lemma}
\label{b1:ramification}
The following hold.
\begin{enumerate}
\item\label{b1:ramification-indices}
The only ramified places of \(k(u)\) in \(F\) are \(Q_\infty\)
and \(Q_a\) with \(a\in\F_{q^2}\). The \(q+1\) places
\(Q_\infty\) and \(Q_a\), \(a\in\F_q\), are totally ramified.
At the remaining \(q^2-q\)
places the ramification index is \((q^n+1)/(q+1)\) when \(n\) is
odd, and \(q^n+1\) when \(n\) is even.
\item\label{b1:infinite-place}
The unique place \(P_\infty\) of \(F\) lying above \(Q_\infty\)
is rational, and
\((u)_\infty=(q^n+1)P_\infty\), \((z)_\infty=q^nP_\infty\).
\end{enumerate}
\end{lemma}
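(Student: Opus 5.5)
The plan is to use the standard theory of Kummer extensions (e.g.\ \cite[Proposition~3.7.3]{Stichtenoth2009}). Since \(q^n+1\) is prime to \(p\) and \(k\) contains the \((q^n+1)\)-st roots of unity, the extension \(F=k(u,z)\) with \(z^{q^n+1}=f(u)\), \(f=cA(u)B(u)^r\), is a Kummer extension of \(k(u)\). For a place \(Q\) of \(k(u)\) the ramification index is therefore
\[
 e_Q=\frac{q^n+1}{\gcd\bigl(q^n+1,v_Q(f)\bigr)}.
\]
So it suffices to compute the valuations \(v_Q(f)\) at all places of \(k(u)\).

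First consider the finite places. \(A(u)=\prod_{a\in\F_q}(u-a)\) has simple zeros exactly at \(\F_q\), and \(B\) has simple zeros exactly at \(\F_{q^2}\setminus\F_q\), as noted before the lemma. The two sets are disjoint, so:
\begin{itemize}
\item \(v_{Q_a}(f)=1\) for \(a\in\F_q\), hence total ramification there;
\item \(v_{Q_a}(f)=r\) for \(a\in\F_{q^2}\setminus\F_q\);
\item \(v_{Q}(f)=0\) at every other finite place (including those of degree \(>1\)), hence no ramification there.
\end{itemize}
At the places \(Q_a\) with \(a\in\F_{q^2}\setminus\F_q\), the index is \((q^n+1)/\varepsilon\). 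By the computation of \(\varepsilon=\gcd(r,q^n+1)\) made just before the lemma, this equals \((q^n+1)/(q+1)\) for odd \(n\) and \(q^n+1\) for even \(n\).

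Next, at \(Q_\infty\) we have \(\deg A=q\) and \(\deg B=q(q-1)\), so
\[
 v_{Q_\infty}(f)=-\bigl(q+q(q-1)r\bigr)=-q\bigl(1+(q-1)r\bigr)=-q\cdot q^{n-1}=-q^n,
\]
using \((q-1)r=q^{n-1}-1\). Since \(\gcd(q^n,q^n+1)=1\), \(Q_\infty\) is totally ramified. This finishes part~\eqref{b1:ramification-indices}.

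For part~\eqref{b1:infinite-place}, total ramification gives a unique place \(P_\infty\) over \(Q_\infty\). Its residue degree is \(1\), and \(Q_\infty\) is rational, so \(P_\infty\) is rational. Then:
\begin{itemize}
\item \(v_{P_\infty}(u)=e\cdot v_{Q_\infty}(u)=-(q^n+1)\);
\item \((q^n+1)\,v_{P_\infty}(z)=v_{P_\infty}(f)=(q^n+1)(-q^n)\), so \(v_{P_\infty}(z)=-q^n\).
\end{itemize}
The finite places of \(F\) lie over finite places of \(k(u)\), where \(u\) has no pole and \(z\) is integral over \(k[u]\). Hence \(P_\infty\) is the only pole of both functions, which gives the stated divisors.

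There is no real obstacle here. The only points needing care are the degree count at infinity and the invocation of the gcd computation for \(\varepsilon\), both of which are routine.
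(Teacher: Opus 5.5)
Your proposal is correct and follows the paper's proof essentially step for step. You compute $v_Q(f)$ at every place of $k(u)$ (namely $1$ on $\F_q$, $r$ on $\F_{q^2}\setminus\F_q$, $-q^n$ at $Q_\infty$, and $0$ elsewhere), apply the Kummer ramification formula of Stichtenoth's Proposition~3.7.3 together with $\varepsilon=\gcd(r,q^n+1)$, and read off the rationality of $P_\infty$ and the pole divisors from total ramification at $Q_\infty$. The one addition is your integrality remark explaining why $z$ has no finite poles, a detail the paper leaves implicit.
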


\begin{proof}
In \(k(u)\), the function \(cA(u)B(u)^r\) has valuation one at
the places \(Q_a\) with \(a\in\F_q\), valuation \(r\) at the places
\(Q_a\) with \(a\in\F_{q^2}\setminus\F_q\), and valuation
\(-q^n\) at \(Q_\infty\). Since \(p\nmid q^n+1\),
\cite[Proposition~3.7.3]{Stichtenoth2009} gives the asserted
ramification indices and shows that every other place is unramified.
The place \(Q_\infty\) is rational and totally ramified, so \(P_\infty\) is
rational. The valuations of \(u\) and \(z\) at \(P_\infty\) follow
from \eqref{b1:model}.
\end{proof}

\begin{proposition}
\label{b1:genus}
The genus of \(\mathcal X_{q,n}\) is
\begin{equation}
\label{b1:genus-formula}
 g(\mathcal X_{q,n})=
 \begin{cases}
 \bigl((q^2-1)q^n-q^2(q-1)\bigr)/2,&n\text{ odd},\\
 (q^2-1)q^n/2,&n\text{ even}.
 \end{cases}
\end{equation}
\end{proposition}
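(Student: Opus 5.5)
We apply the Riemann--Hurwitz genus formula to the extension \(F/k(u)\), which is cyclic of degree \(q^n+1\) with \(p\nmid q^n+1\). Every ramified place is therefore tamely ramified. Lemma~\ref{b1:ramification} lists the ramified places and their indices, and all of them have degree one: \(Q_\infty\) and the \(Q_a\) with \(a\in\F_{q^2}\subseteq k\). Since \(k(u)\) is rational and \(k\) is the full constant field of \(F\) (the defining polynomial is absolutely irreducible), the formula reads
\[
 2g-2=(q^n+1)(-2)+\sum_{P}(e_P-1)\deg P,
\]
with the sum taken over places \(P\) of \(F\). For a place \(Q\) of \(k(u)\) with ramification index \(e_Q\), there are \((q^n+1)/e_Q\) places above it, each of degree one (cyclic, hence Galois, with all residue fields equal to \(k\)). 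Each such \(Q\) therefore contributes \((q^n+1)(1-1/e_Q)\), which is \((q^n+1)-(q^n+1)/e_Q\).

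The contributions are as follows. The \(q+1\) totally ramified places \(Q_\infty\) and \(Q_a\) with \(a\in\F_q\) each contribute \(q^n\). Each of the remaining \(q^2-q\) places \(Q_a\) with \(a\in\F_{q^2}\setminus\F_q\) contributes \(q^n+1-\varepsilon\), where \(\varepsilon=\gcd(r,q^n+1)\) is the number of places above it. Combining with Lemma~\ref{b1:ramification}, \(\varepsilon=q+1\) for odd \(n\) and \(\varepsilon=1\) for even \(n\). Hence
\[
 2g-2=-2(q^n+1)+(q+1)q^n+(q^2-q)(q^n+1-\varepsilon).
\]

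For even \(n\) we have \(\varepsilon=1\), and the right side equals \(-2q^n-2+q^{n+1}+q^n+q^{n+2}-q^{n+1}=q^{n+2}-q^n-2\). This gives \(g=(q^2-1)q^n/2\). For odd \(n\) we have \(\varepsilon=q+1\), and the extra term \(-(q^2-q)q\) gives \(2g=(q^2-1)q^n-q^2(q-1)\), as claimed.

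The only point needing care is the count of places above the non-totally ramified \(Q_a\), that is, \(\varepsilon=(q^n+1)/e\). This follows directly from Lemma~\ref{b1:ramification} and the fundamental equality \(efr=q^n+1\) with residue degree \(f=1\), because \(k\) is algebraically closed in \(F\) and \(Q_a\) is rational. There is no real obstacle here; the computation is routine arithmetic.
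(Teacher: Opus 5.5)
Your proposal is correct and is essentially the paper's proof. The paper applies Stichtenoth's Kummer genus formula (Corollary~3.7.4), whose local term is exactly your \((q^n+1)-(q^n+1)/e_Q\), to the data of Lemma~\ref{b1:ramification}, and obtains the same expression for \(2g-2\).

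One caveat. Residue degree \(f=1\) does not follow just from \(k\) being algebraically closed in \(F\) and \(Q_a\) being rational. It is true here, but only by the extra argument in Lemma~\ref{b1:ramified-rational-places}. You never actually need it, because in a Galois extension \(\sum_{P\mid Q}(e_P-1)\deg P=(q^n+1)(1-1/e_Q)\deg Q\) for any residue degree.
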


\begin{proof}
Applying \cite[Corollary~3.7.4]{Stichtenoth2009} with the valuations
computed in the proof of Lemma~\ref{b1:ramification} gives
\[
 2g(F)-2=-2(q^n+1)+(q+1)q^n+(q^2-q)(q^n+1-\varepsilon).
\]
Substituting \(\varepsilon=q+1\) for odd \(n\) and
\(\varepsilon=1\) for even \(n\) gives
\eqref{b1:genus-formula}.
\end{proof}

For odd \(n\), \eqref{b1:genus-formula} agrees with the computation in
\cite[Proposition~2.1]{BM2018}.

\Needspace{18\baselineskip}
\section{Maximality}
\label{b1:maximality-section}

We prove maximality by counting rational places of \(k(u)\) that split
completely in \(F/k(u)\). Put \(f(T)=cA(T)B(T)^r\in k[T]\), so that
\eqref{b1:model} is \(z^{q^n+1}=f(u)\).
The map \(\lambda\mapsto\lambda^{q^n+1}\) is the norm map
from \(k^*\) onto \(\F_{q^n}^*\), with kernel of order \(q^n+1\).
Hence, for \(a\in k\setminus\F_{q^2}\), the place \(Q_a\) splits
completely in \(F/k(u)\) if and only if \(f(a)\in\F_{q^n}^*\).

\begin{lemma}
\label{b1:mobius-identities}
For \(M=\left(\begin{smallmatrix}\alpha&\beta\\\gamma&\delta\end{smallmatrix}\right)
\in\GL_2(q)\), write \(M(T)=(\alpha T+\beta)/(\gamma T+\delta)\).  Then
\begin{equation}
\label{b1:mobius-f}
 f(M(T))=\frac{\det(M)f(T)}{(\gamma T+\delta)^{q^n+1}}.
\end{equation}
Moreover, \(f(a)^{q^n}=-f(a^{q^n})\) for every \(a\in k\).
\end{lemma}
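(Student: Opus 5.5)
We first prove the identity \eqref{b1:mobius-f}, using the multiplicative structure of $A$ and $B$.

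\emph{Factorisation.} For $M\in\GL_2(q)$ put $D=\gamma T+\delta$. Write $A(T)B(T)=T^{q^2}-T$. Then
\[
 f(T)=cA(T)^{1-r}\bigl(T^{q^2}-T\bigr)^r=cA(T)^{1-r}\bigl(A(T)B(T)\bigr)^r,
\]
so it suffices to know how $A$ and $AB$ transform under $T\mapsto M(T)$.

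\emph{Transformation of $A$.} Since the entries of $M$ lie in $\F_q$, raising to the $q$-th power fixes them, so $M(T)^q=(\alpha T^q+\beta)/(\gamma T^q+\delta)$. Hence
\[
 A(M(T))=\frac{(\alpha T^q+\beta)(\gamma T+\delta)-(\alpha T+\beta)(\gamma T^q+\delta)}{D^{q+1}}
 =\frac{\det(M)\,(T^q-T)}{D^{q+1}},
\]
because the numerator collapses to $(\alpha\delta-\beta\gamma)(T^q-T)$.

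\emph{Transformation of $AB$.} The same computation with $q^2$ in place of $q$ gives
\[
 A(M(T))B(M(T))=\frac{\det(M)\,(T^{q^2}-T)}{D^{q^2+1}}.
\]
Dividing by the formula for $A$ gives $B(M(T))=B(T)/D^{q^2-q}$, where the $\det(M)$ factors cancel. Consequently
\[
 f(M(T))=\frac{\det(M)\,f(T)}{D^{\,q+1+(q^2-q)r}}.
\]
Section 2 already records the identity $q+1+(q^2-q)r=q^n+1$, so the exponent is $q^n+1$ and \eqref{b1:mobius-f} follows.

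\emph{The Frobenius identity.} Let $\sigma$ denote the $q^n$-power map. The polynomials $A$ and $B$ have coefficients in $\F_p$, so $A(a)^{q^n}=A(a^{q^n})$ and $B(a)^{q^n}=B(a^{q^n})$ for every $a\in k$. Since $c^{q^n}=-c$, applying $\sigma$ to $f(a)=cA(a)B(a)^r$ gives $f(a)^{q^n}=-f(a^{q^n})$.

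The only point needing care is the determinant bookkeeping in the $q^2$-analogue. There the numerator again factors as $\det(M)(T^{q^2}-T)$, for the same reason as before: the entries of $M$ are fixed by the $q^2$-power map. Apart from this, the argument is routine algebra.
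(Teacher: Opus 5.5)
Your proposal is correct and follows essentially the same route as the paper. Like the paper, you transform $A$ and $T^{q^2}-T$ under $M$, deduce $B(M(T))=B(T)/(\gamma T+\delta)^{q^2-q}$, combine using $q+1+(q^2-q)r=q^n+1$, and get the Frobenius identity from the coefficients of $A,B$ being fixed by the $q^n$-power map together with $c^{q^n}=-c$; you simply write out the numerator computation that the paper summarizes as ``substitution gives.''
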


\begin{proof}
Using \(B(T)=(T^{q^2}-T)/A(T)\), substitution gives
\[
 A(M(T))=\frac{\det(M)A(T)}{(\gamma T+\delta)^{q+1}},\qquad
 B(M(T))=\frac{B(T)}{(\gamma T+\delta)^{q^2-q}}.
\]
Since \(q+1+(q^2-q)r=q^n+1\), these identities give
\eqref{b1:mobius-f}.  The last assertion follows from
\(A,B\in\F_q[T]\) and \(c^{q^n}=-c\).
\end{proof}

To find completely split places, we consider equations
\(a^{q^n}=M(a)\) with \(a\in k\setminus\F_{q^2}\) and
\(M\in\PGL_2(q)\). A trace-zero matrix representing \(M\) ensures
that such a solution satisfies \(f(a)^{q^n}=f(a)\), as shown in the
proof of Theorem~\ref{b1:maximality}.
Let \(\mathcal I_q\) be the set of elements of \(\PGL_2(q)\)
represented by matrices of trace zero in \(\GL_2(q)\).
For odd \(q\), these are precisely
the nonidentity involutions in \(\PGL_2(q)\). For even \(q\), scalar
matrices also have trace zero, so \(\mathcal I_q\) consists of the
nonidentity involutions and the identity.

\begin{lemma}
\label{b1:involution-solutions}
The following hold.
\begin{enumerate}
\item\label{b1:involution-cardinality}
The set \(\mathcal I_q\) has \(q^2\) elements.
\item\label{b1:involution-root-count}
For each
\(M\in\mathcal I_q\), the equation
\begin{equation}
\label{b1:frobenius-involution}
 a^{q^n}=M(a)
\end{equation}
has exactly \(q^n+1\) solutions in \(\PP^1(k)\).
\item\label{b1:involution-disjointness}
The solution sets of \eqref{b1:frobenius-involution} for
distinct elements of \(\mathcal I_q\) are disjoint outside
\(\PP^1(\F_{q^2})\).
\end{enumerate}
\end{lemma}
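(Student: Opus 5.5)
The plan is to prove the three parts of Lemma~\ref{b1:involution-solutions} separately. Part~(1) is a count of matrices, part~(2) uses a twisted Frobenius together with Lang's theorem, and part~(3) uses the fixed points of elements of \(\PGL_2(q)\).

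For part~(1), write a trace-zero matrix as \(\left(\begin{smallmatrix}\alpha&\beta\\\gamma&-\alpha\end{smallmatrix}\right)\). It is invertible exactly when \(\alpha^2+\beta\gamma\ne0\).
\begin{itemize}
\item First count the triples \((\alpha,\beta,\gamma)\in\F_q^3\) with \(\alpha^2+\beta\gamma=0\). If \(\beta\ne0\), then \(\gamma\) is determined by \(\alpha\), giving \(q(q-1)\) triples. If \(\beta=0\), then \(\alpha=0\) and \(\gamma\) is arbitrary, giving \(q\) triples. So there are \(q^2\) singular triples.
\item Hence there are \(q^3-q^2\) invertible trace-zero matrices. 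The trace-zero condition is stable under nonzero scalars, so each class in \(\mathcal I_q\) contains exactly \(q-1\) of them.
\item This gives \(|\mathcal I_q|=q^2\). For even \(q\) the scalar matrices form one of these classes, namely the identity, which is consistent with the description of \(\mathcal I_q\) given before the lemma.
\end{itemize}

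For part~(2), let \(\Phi\) be the \(q^n\)-Frobenius on \(\PP^1(\overline{\F}_q)\). Consider \(\sigma=M^{-1}\circ\Phi\); its fixed points are exactly the solutions of \eqref{b1:frobenius-involution}.
\begin{itemize}
\item By Cayley--Hamilton, a trace-zero matrix satisfies \(M^2=-\det(M)I\). So \(M\) is an involution or the identity in \(\PGL_2(q)\).
\item Since \(M\) has entries in \(\F_q\), it commutes with \(\Phi\). Therefore \(\sigma^2=M^{-2}\Phi^2=\Phi^2\), the \(q^{2n}\)-Frobenius.
\item Every fixed point \(a\) of \(\sigma\) then satisfies \(a^{q^{2n}}=a\), so all solutions lie in \(\PP^1(k)\).
\item To count them, apply Lang's theorem to the connected group \(\PGL_2\) over \(\F_{q^n}\). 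It gives \(N\in\PGL_2(\overline{\F}_q)\) with \(M=\Phi(N)N^{-1}\), where \(\Phi(N)\) means applying Frobenius to the entries of \(N\).
\item A short computation then gives \(\sigma=N\circ\Phi\circ N^{-1}\). Hence \(N^{-1}\) maps the fixed points of \(\sigma\) bijectively onto \(\PP^1(\F_{q^n})\), and there are exactly \(q^n+1\) of them.
\item A more elementary alternative is to check that the degree-\((q^n+1)\) equation \(a^{q^n}(\gamma a+\delta)=\alpha a+\beta\) is separable and to handle the point \(\infty\) by hand. This requires case work, so I would use Lang's theorem.
\end{itemize}

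For part~(3), suppose \(a\in\PP^1(k)\setminus\PP^1(\F_{q^2})\) solves \eqref{b1:frobenius-involution} for both \(M\) and \(M'\).
\begin{itemize}
\item Then \(M(a)=a^{q^n}=M'(a)\), so \(M'^{-1}M\) fixes \(a\).
\item A nonidentity element \(\left(\begin{smallmatrix}\alpha&\beta\\\gamma&\delta\end{smallmatrix}\right)\) of \(\PGL_2(q)\) has as its fixed points the roots in \(\PP^1\) of the polynomial \(\gamma T^2+(\delta-\alpha)T-\beta\), together with \(\infty\) when \(\gamma=0\). This polynomial has coefficients in \(\F_q\) and degree at most \(2\). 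It is not identically zero, since that would force \(\gamma=\beta=0\) and \(\alpha=\delta\), which is the identity.
\item So the fixed points lie in \(\PP^1(\F_{q^2})\). This forces \(M'^{-1}M=1\), that is, \(M=M'\).
\end{itemize}

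The main obstacle is part~(2): the exact count of \(q^n+1\) distinct solutions. The key steps there are the identity \(\sigma^2=\Phi^2\), which relies on the trace-zero condition, and justifying the use of Lang's theorem. If a self-contained argument is preferred, I would instead diagonalize \(M\) over \(\F_{q^2}\) and count the solutions directly in each case.
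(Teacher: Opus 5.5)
Your proposal is correct. Part (3) is the same fixed-point argument the paper uses, but parts (1) and (2) take a different route.

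The paper writes each element of \(\mathcal I_q\) uniquely as \(-T+\beta\) or \((\alpha T+\beta)/(T-\alpha)\) with \(\alpha^2+\beta\ne0\). This gives \(|\mathcal I_q|=q+q(q-1)\) immediately, and the paper then reuses these normal forms in part (2). There the equation becomes either \(a^{q^n}+a=\beta\), together with \(a=\infty\), or \((a-\alpha)^{q^n+1}=\alpha^2+\beta\). Surjectivity of the trace and norm of \(k/\F_{q^n}\) then yields exactly \(q^n+1\) solutions, visibly lying in \(k\).

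You instead count all invertible trace-zero matrices and divide by \(q-1\). In part (2) you separate two questions:
\begin{enumerate}
\item The count: Lang's theorem shows that \(M^{-1}\Phi\) is conjugate to \(\Phi\). This gives \(q^n+1\) solutions over \(\overline{\F}_q\) for every \(M\in\PGL_2(\overline{\F}_q)\).
\item The field of definition: the trace-zero hypothesis enters only through \(M^2=1\) in \(\PGL_2\), hence \(\sigma^2=\Phi^2\), which places all solutions in \(\PP^1(k)\).
\end{enumerate}

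Your version is more conceptual and isolates exactly what trace zero is needed for. However, it invokes Lang's theorem where the paper needs only the elementary trace/norm facts it uses throughout. The separability route you mention is also short. The derivative of \(\gamma a^{q^n+1}+\delta a^{q^n}-\alpha a-\beta\) is \(\gamma a^{q^n}-\alpha\). A common root would force \(\det M=0\) when \(\gamma\ne0\), and when \(\gamma=0\) the derivative is the nonzero constant \(-\alpha\). So that route needs almost no case work.
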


\begin{proof}
Each element of \(\mathcal I_q\) can be written uniquely in one of the forms
\[
 M(T)=-T+\beta\quad(\beta\in\F_q),\qquad
 M(T)=\frac{\alpha T+\beta}{T-\alpha}
 \quad(\alpha,\beta\in\F_q,\ \alpha^2+\beta\ne0).
\]
Hence \(|\mathcal I_q|=q+q(q-1)=q^2\).

In the first case, the finite solutions of
\eqref{b1:frobenius-involution} satisfy \(a^{q^n}+a=\beta\).
The trace map of \(k/\F_{q^n}\) gives \(q^n\) solutions in \(k\),
and \(\infty\) is also a solution.
In the second case, \eqref{b1:frobenius-involution} is equivalent to
\((a-\alpha)^{q^n+1}=\alpha^2+\beta\).
Since the right-hand side is nonzero, the norm map of
\(k/\F_{q^n}\) gives \(q^n+1\) solutions in \(k\).

If \(M(a)=M'(a)\) for distinct projective transformations over
\(\F_q\), then \(a\) satisfies a nonzero polynomial over \(\F_q\)
of degree at most two, or \(a=\infty\).  Thus
\(a\in\PP^1(\F_{q^2})\).
\end{proof}

\Needspace{8\baselineskip}
\begin{lemma}
\label{b1:ramified-rational-places}
The number of rational places of \(F\) lying above \(Q_a\),
\(a\in\PP^1(\F_{q^2})\), is \(q^3+1\) when \(n\) is
odd, and \(q^2+1\) when \(n\) is even.
\end{lemma}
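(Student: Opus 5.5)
From Lemma~\ref{b1:ramification}, the \(q+1\) places \(Q_\infty\) and \(Q_a\) with \(a\in\F_q\) are totally ramified. Each is rational in \(k(u)\), so each contributes exactly one rational place of \(F\), giving \(q+1\) rational places.

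It remains to count rational places above \(Q_a\) for \(a\in\F_{q^2}\setminus\F_q\); there are \(q^2-q\) such \(a\). At these places \(f\) has valuation \(r\), and the ramification index is \(e=(q^n+1)/\varepsilon\). In a Kummer extension \(z^{q^n+1}=f(u)\) with the full group of roots of unity in \(k\), there are exactly \(\varepsilon\) places above \(Q_a\). So the claim is that for odd \(n\) all \(q+1\) places above each such \(Q_a\) are rational (giving \((q^2-q)(q+1)\) more, for a total of \(q+1+q^3-q=q^3+1\)). For even \(n\), \(\varepsilon=1\), so \(Q_a\) is totally ramified and the unique place above it is rational, since \(Q_a\) is rational. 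This gives a total of \(q+1+q^2-q=q^2+1\). The even case is therefore immediate.

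For odd \(n\), the residue field of \(Q_a\) is \(k\), so the \(\varepsilon\) places above \(Q_a\) all have degree one: \(f\cdot e\cdot \varepsilon=q^n+1\) with \(e\varepsilon=q^n+1\) forces every residue degree to be \(1\). I would give this short argument explicitly. The fundamental equality \(\sum e_if_i=[F:k(u)]\) with all \(e_i=e\) gives \(\sum f_i=\varepsilon\). Since the number of places is \(\varepsilon\) (the extension is Galois, so all \(f_i\) are equal), each \(f_i=1\).

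To justify that there are \(\varepsilon\) places, write \(z^{q^n+1}=f\) with \(v_{Q_a}(f)=r\) and \(\gcd(r,q^n+1)=\varepsilon\). Set \(w=z^{e}\). Then \(w^{\varepsilon}=f\cdot(\text{unit-adjusted})\), and more precisely the intermediate field \(k(u,z^{e})\) satisfies \((z^e)^{\varepsilon}=f\). Write \(r=\varepsilon r'\) and \(f=(u-a)^{r}g\) with \(g(a)\neq0\). Then \(z^e/(u-a)^{r'}\) has \(\varepsilon\)-th power \(g\), which at \(Q_a\) has residue \(g(a)\in k^*\). Since \(\varepsilon\mid q^n+1\mid q^{2n}-1\), the element \(g(a)\) is an \(\varepsilon\)-th power in \(k\) exactly when \(g(a)^{(q^{2n}-1)/\varepsilon}=1\). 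This is the one point I would double-check: Kummer theory says \(Q_a\) splits completely in the degree-\(\varepsilon\) subextension iff \(g(a)\) is an \(\varepsilon\)-th power in the residue field. The counting argument via the fundamental equality only gives the number of places times the common residue degree equals \(\varepsilon\), so I would verify splitting by direct computation. Using \(f=cA B^r\) and the \(\GL_2(q)\)-identity \eqref{b1:mobius-f} would reduce the computation to a single \(a=\theta\in\F_{q^2}\setminus\F_q\). There \(g(\theta)=cA(\theta)\cdot\bigl(B(u)/(u-\theta)\bigr)^r|_{u=\theta}\). Taking \(c=(\theta-\theta^q)^{-1}\) as in Proposition~\ref{b1:bm-identification}, one checks that this is an element of \(\F_{q^2}^*\) raised to powers such that it lies in \((k^*)^{q+1}\); every element of \(\F_{q^2}^*\) is a \((q+1)\)-th power in \(k\) because \((q^2-1)(q+1)\mid q^{2n}-1\) for odd \(n\). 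That last inclusion is the key observation, and it makes the verification painless. Alternatively, I would derive the count from the Beelen--Montanucci model, where the corresponding points are visible explicitly.
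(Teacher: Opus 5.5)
There is a genuine gap at the step you call the key observation. The rest of your structure matches the paper's: the $q+1$ totally ramified places, total ramification at the zeros of $B$ for even $n$, and, for odd $n$, splitting of $Q_a$ in the degree-$(q+1)$ Kummer subextension followed by total ramification in the remaining step. Your second and third paragraphs assert that there are exactly $\varepsilon$ places above $Q_a$, which is circular as stated. You flag this yourself, and it is exactly the splitting statement that has to be proved.

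The key observation is false. It is not true that every element of $\F_{q^2}^*$ is a $(q+1)$-th power in $k$ for odd $n$. Since $(q^{2n}-1)/(q^2-1)=\sum_{i=0}^{n-1}q^{2i}\equiv n \pmod{q+1}$, the divisibility $(q^2-1)(q+1)\mid q^{2n}-1$ holds only when $q+1\mid n$. For $q=3$, $n=3$ one has $|k^*|=728$, and the fourth powers form the subgroup of order $182$. Since $\gcd(8,182)=2$, only $\pm1\in\F_9^*$ are fourth powers in $\F_{3^6}$. So knowing $g(\theta)\in\F_{q^2}^*$ proves nothing, and your argument as written does not establish that $Q_\theta$ splits.

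What rescues it is the specific shape of $g(\theta)$. Write $g(\theta)=cA(\theta)\,\beta^{r}$ with $\beta=\prod_{\alpha\ne\theta}(\theta-\alpha)\in\F_{q^2}^*$.
\begin{enumerate}
\item For odd $n$ you have $q+1\mid r$, so $\beta^r$ is a $(q+1)$-th power.
\item $cA(\theta)\in\F_{q^n}^*$, because $A(\theta)^{q^n}=A(\theta)^q=-A(\theta)$ and $c^{q^n}=-c$. With your choice $c=(\theta-\theta^q)^{-1}$, simply $cA(\theta)=-1$.
\item Since $q+1\mid q^n+1$, the subgroup $\F_{q^n}^*$ of order $q^n-1$ lies in $(k^*)^{q+1}$, which has order $(q^n-1)(q^n+1)/(q+1)$.
\end{enumerate}
This is precisely the paper's argument. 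The paper packages the same computation as $w=z^m/B(u)^{r/(q+1)}$ with $w^{q+1}=cA(u)$, so the $B$-factor disappears from the start. It then shows $cA(a)\in\F_{q^n}^*$ for every $a\in\F_{q^2}\setminus\F_q$ at once, so no reduction to a single $\theta$ via $\GL_2(q)$ is needed.
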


\begin{proof}
For even \(n\), these places of \(k(u)\) are totally ramified in
\(F/k(u)\), by Lemma~\ref{b1:ramification}(\ref{b1:ramification-indices}), and give \(q^2+1\)
rational places of \(F\).

Suppose now that \(n\) is odd. The places \(Q_a\) with
\(a\in\PP^1(\F_q)\) give \(q+1\) rational places of \(F\).
Put \(m=(q^n+1)/(q+1)\). Since \(q+1\mid r\), the function
\(w=z^m/B(u)^{r/(q+1)}\) satisfies
\begin{equation}
\label{b1:intermediate-equation}
 w^{q+1}=cA(u).
\end{equation}
The function \(cA(u)\) has a simple zero at \(u=0\), so
\(T^{q+1}-cA(u)\) is irreducible and
\([k(u,w):k(u)]=q+1\).

For \(a\in\F_{q^2}\setminus\F_q\), we have
\(A(a)^{q^n}=-A(a)\) and \(c^{q^n}=-c\), hence
\(cA(a)\in\F_{q^n}^*\).  Since \((q+1)\mid q^n+1\), every element of
\(\F_{q^n}^*\) is a \((q+1)\)-st power in \(k\). Thus
\(T^{q+1}-cA(a)\) has \(q+1\) distinct roots in \(k\).
By \cite[Corollary~3.3.8(c)]{Stichtenoth2009}, the place \(Q_a\)
has \(q+1\) distinct rational extensions to \(k(u,w)\), and hence
splits completely.

Now \([F:k(u,w)]=m\). By
Lemma~\ref{b1:ramification}(\ref{b1:ramification-indices}), every place of \(F\) above
\(Q_a\) has ramification index \(m\) over \(k(u)\).
Since \(Q_a\) splits completely in \(k(u,w)\), multiplicativity of
ramification indices shows that each of these \(q+1\) rational
places of \(k(u,w)\) is totally ramified in \(F\).
Thus each has a unique rational place of \(F\) above it.
The total contribution for odd \(n\) is consequently \(q+1+(q^2-q)(q+1)=q^3+1\).
\end{proof}

\begin{theorem}
\label{b1:maximality}
For every prime power \(q\) and every integer \(n\ge2\), the curve
\(\mathcal X_{q,n}\) is maximal over \(\F_{q^{2n}}\).
\end{theorem}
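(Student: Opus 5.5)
The plan is to count rational places of \(F\) and show the count reaches the Hasse--Weil bound \(q^{2n}+1+2g(\mathcal X_{q,n})q^n\). The genus comes from Proposition~\ref{b1:genus}. Rational places come from two sources:
\begin{itemize}
\item those above \(Q_a\) with \(a\in\PP^1(\F_{q^2})\), counted in Lemma~\ref{b1:ramified-rational-places};
\item those above places \(Q_a\) with \(a\in k\setminus\F_{q^2}\) that split completely.
\end{itemize}
Each completely split \(Q_a\) contributes \(q^n+1\) rational places. Subtracting the first contribution from the bound and dividing by \(q^n+1\), one needs at least
\[
 q^2(q^n-1)\ (n\text{ even}),\qquad q^2(q^n-q)\ (n\text{ odd})
\]
completely split places \(Q_a\) with \(a\in k\setminus\F_{q^2}\). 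A lower bound suffices, since the Hasse--Weil bound gives the reverse inequality.

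\emph{Step 1: solutions of \eqref{b1:frobenius-involution} split completely.} Let \(a\in k\setminus\F_{q^2}\) satisfy \(a^{q^n}=M(a)\) for some \(M\in\mathcal I_q\), represented by \(\left(\begin{smallmatrix}\alpha&\beta\\\gamma&-\alpha\end{smallmatrix}\right)\). Lemma~\ref{b1:mobius-identities} gives
\[
 f(a)^{q^n}=-f(M(a))=\frac{-\det(M)f(a)}{(\gamma a-\alpha)^{q^n+1}}.
\]
Next, \((\gamma a-\alpha)^{q^n+1}=(\gamma M(a)-\alpha)(\gamma a-\alpha)\). A direct computation using trace zero gives \(\gamma M(a)-\alpha=-\det(M)/(\gamma a-\alpha)\). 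Hence the denominator equals \(-\det(M)\), and so \(f(a)^{q^n}=f(a)\). Also \(f(a)\neq0\), because the zeros of \(f\) lie in \(\F_{q^2}\). Therefore \(f(a)\in\F_{q^n}^*\), and \(Q_a\) splits completely. The degenerate case \(\gamma=0\) is handled the same way.

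\emph{Step 2: count the solutions outside \(\PP^1(\F_{q^2})\).} By Lemma~\ref{b1:involution-solutions}, the \(q^2\) equations have \(q^n+1\) solutions each, and their solution sets are disjoint outside \(\PP^1(\F_{q^2})\). So it remains to compute
\[
 S=\sum_{M\in\mathcal I_q}\#\{a\in\PP^1(\F_{q^2}) : a^{q^n}=M(a)\}.
\]
The target counts require \(S=2q^2\) for even \(n\) and \(S=q^2(q+1)\) for odd \(n\). On \(\PP^1(\F_{q^2})\), the map \(a^{q^n}\) equals \(a\) for even \(n\) and \(a^q\) for odd \(n\).

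\emph{Even \(n\).} Here \(S\) counts pairs \((M,a)\) with \(a\in\PP^1(\F_{q^2})\) fixed by \(M\).
\begin{itemize}
\item Each \(a\in\PP^1(\F_q)\) is fixed by exactly the involutions of \(\mathcal I_q\) with a fixed point there.
\item Each \(a\in\F_{q^2}\setminus\F_q\) determines a pair \(\{a,a^q\}\), and one counts the trace-zero elements fixing it. There is exactly one: the unique involution swapping nothing, with fixed points \(a,a^q\), in odd characteristic.
\end{itemize}
Parity of \(q\) must be tracked. For even \(q\), the identity lies in \(\mathcal I_q\) and contributes \(q^2+1\), while each nonidentity involution has a single fixed point in \(\PP^1(\F_q)\).

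\emph{Odd \(n\).} Here \(S\) counts pairs with \(M(a)=a^q\).
\begin{itemize}
\item For \(a\in\PP^1(\F_q)\), this means \(M\) fixes \(a\).
\item For \(a\in\F_{q^2}\setminus\F_q\), one needs \(M\) swapping \(a\) and \(a^q\). Such \(M\) are automatically involutions, and they form a coset-like family of size \(q+1\) in the stabilizer of \(\{a,a^q\}\).
\end{itemize}

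The main obstacle is this bookkeeping of \(S\), especially making the odd- and even-\(q\) cases both yield the exact totals. Nothing beyond a lower bound on the number of split places is needed, so it suffices to show \(S\le 2q^2\) (even \(n\)), respectively \(S\le q^2(q+1)\) (odd \(n\)).
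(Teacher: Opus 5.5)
Your reduction and Step~1 match the paper's proof, including the identity $(\gamma a-\alpha)^{q^n+1}=-\det M$. Your reformulation in terms of $S$, with targets $2q^2$ and $q^2(q+1)$, is also correct. The gap is Step~2, which you do not carry out, and this is where the count has to close.

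\emph{Even $n$, odd $q$.} You evaluate only the contribution of $a\in\F_{q^2}\setminus\F_q$: one element of $\mathcal I_q$ each, for a total of $q^2-q$. Your sentence about $a\in\PP^1(\F_q)$ is a tautology, yet you need that term to equal $q(q+1)$.

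\emph{Odd $n$.} The $\PP^1(\F_q)$ term, again $q(q+1)$, is not computed. The value $q+1$ for each $a\in\F_{q^2}\setminus\F_q$ rests on an unproved description of the setwise stabilizer of $\{a,a^q\}$ in $\PGL_2(q)$ as dihedral of order $2(q+1)$.

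Only for even $n$ and even $q$ do your stated facts already add up, via $q^2+1+(q^2-1)=2q^2$. Otherwise neither $S\le 2q^2$ nor $S\le q^2(q+1)$ is established, so the place count does not yet reach the Hasse--Weil bound.

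The per-point bookkeeping can be completed, but the paper closes this step more simply by counting per $M$ rather than per point.

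For odd $n$, the equation on $\PP^1(\F_{q^2})$ becomes $a^q=M(a)$. In the two normal forms of Lemma~\ref{b1:involution-solutions} it reads either $a^q+a=\beta$, together with the solution $\infty$, or $(a-\alpha)^{q+1}=\alpha^2+\beta\neq0$. The trace and norm of $\F_{q^2}/\F_q$ then give exactly $q+1$ solutions for every $M\in\mathcal I_q$, so $S=q^2(q+1)$.

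For even $n$, the solutions are the fixed points of $M$. Take a trace-zero representative $\left(\begin{smallmatrix}\alpha&\beta\\\gamma&-\alpha\end{smallmatrix}\right)$ with $q$ odd. The fixed points are the roots of $\gamma T^2-2\alpha T-\beta$, with $\infty$ added when $\gamma=0$. The discriminant $4(\alpha^2+\beta\gamma)=-4\det M\neq0$ gives exactly two fixed points, so $S=2q^2$.

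With this step inserted, your argument becomes the paper's proof.
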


\begin{proof}
We first consider the solutions of \eqref{b1:frobenius-involution}
in \(k\setminus\F_{q^2}\). Choose a trace-zero representative
\(M=\left(\begin{smallmatrix}\alpha&\beta\\\gamma&\delta\end{smallmatrix}\right)\)
and let \(a\in k\setminus\F_{q^2}\) satisfy this equation.
Using \(a^{q^n}=M(a)\) and \(\alpha+\delta=0\), we obtain
\[
 (\gamma a+\delta)^{q^n+1}
 =(\gamma M(a)+\delta)(\gamma a+\delta)
 =\gamma\beta+\delta^2=-\det(M).
\]
Lemma~\ref{b1:mobius-identities} now yields
\[
 f(a)^{q^n}=-f(a^{q^n})=-f(M(a))=f(a).
\]
Thus \(f(a)\in\F_{q^n}^*\), so the place \(Q_a\) splits completely
in \(F/k(u)\).

Suppose first that \(n\) is odd.  On \(\F_{q^2}\), the
\(q^n\)-power map is the \(q\)-power map. The trace and norm calculations
in Lemma~\ref{b1:involution-solutions}, applied to \(\F_{q^2}/\F_q\),
show that \(a^q=M(a)\) has exactly \(q+1\) solutions in
\(\PP^1(\F_{q^2})\). Therefore each element of \(\mathcal I_q\)
gives \(q^n-q\) rational places \(Q_a\), with \(a\notin\F_{q^2}\),
that split completely in \(F/k(u)\). The sets of places obtained
from distinct elements are disjoint by
Lemma~\ref{b1:involution-solutions}(\ref{b1:involution-disjointness}), giving
\(q^2(q^n-q)\) such places.

Suppose next that \(n\) is even.  The solutions in
\(\PP^1(\F_{q^2})\) are the fixed points of \(M\).
If \(q\) is odd, every involution has two distinct fixed points in
\(\PP^1(\F_{q^2})\).  Thus each of the \(q^2\) involutions contributes
\(q^n-1\) places.  If \(q\) is even, each nonidentity involution has
one fixed point, which belongs to \(\PP^1(\F_q)\), and contributes
\(q^n\) places.  The identity contributes \(q^n-q^2\) places.
The total in either characteristic is \(q^2(q^n-1)\).

Together with Lemma~\ref{b1:ramified-rational-places}, these
completely split places give
\begin{equation}
\label{b1:rational-count}
 \#\mathcal X_{q,n}(k)\ \ge\
 \begin{cases}
 (q^n+1)q^2(q^n-q)+q^3+1,&n\text{ odd},\\
 (q^n+1)q^2(q^n-1)+q^2+1,&n\text{ even}.
 \end{cases}
\end{equation}
By \eqref{b1:genus-formula}, each right-hand side equals
\(q^{2n}+1+2g(F)q^n\). The Hasse--Weil bound forces
equality, proving maximality.
\end{proof}

Equality in \eqref{b1:rational-count} shows that the places counted
in the proof are all the rational places of \(F\).
In particular, for \(a\in k\setminus\F_{q^2}\), the place \(Q_a\)
splits completely in \(F/k(u)\) if and only if
\eqref{b1:frobenius-involution} holds for some
\(M\in\mathcal I_q\); such an \(M\) is unique.

\begin{remark}
\label{b1:bm-maximality}
For odd \(n\ge3\), Theorem~\ref{b1:maximality} gives an independent
proof of the maximality established in \cite[Theorem~3.9]{BM2018}.
\end{remark}

\section{Coverings by the Hermitian curve}
\label{b1:noncover-section}

We prove that, for \(q>2\) and \(n\ge3\), the curve
\(\mathcal X_{q,n}\) is not covered by \(\mathcal H_{q^n}\) over
\(k\). Put \(N=q^n+1\) for the proof below. We first record a
sequence of functions on \(\mathcal X_{q,n}\) in the
Riemann--Roch space \(\mathcal L(NP_\infty)\).

For \(n\ge3\), put \(j=q^2-q\) and
\begin{equation}
\label{b1:ell}
 \ell=\left\lfloor\frac{r}{q+1}\right\rfloor
 =\begin{cases}
 (q^{n-1}-1)/(q^2-1),&n\text{ odd},\\
 (q^{n-1}-q)/(q^2-1),&n\text{ even},
 \end{cases}
 \qquad v=\frac{z^j}{B(u)}.
\end{equation}
Thus \(\ell\ge1\); when \(n\) is even, \(n\ge4\) and
\(\ell\ge q\).

\begin{lemma}
\label{b1:pole-functions}
For every prime power \(q\) and every \(n\ge3\), the function \(v\)
is nonconstant, and
\begin{equation}
\label{b1:function-list}
 1,u,z,zv,\ldots,zv^\ell\in\mathcal L(NP_\infty).
\end{equation}
\end{lemma}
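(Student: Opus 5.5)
The plan is to compute the divisor of each listed function and to check that its only pole is at \(P_\infty\) with order at most \(N=q^n+1\). By Lemma~\ref{b1:ramification}(\ref{b1:infinite-place}), \(1,u,z\) have pole divisors \(0\), \(NP_\infty\) and \(q^nP_\infty\), so they lie in \(\mathcal L(NP_\infty)\). The work lies with \(zv^i\) for \(1\le i\le\ell\).

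First I handle the finite places. The only zeros of \(B(u)\) are the places \(Q_a\) with \(a\in\F_{q^2}\setminus\F_q\). At a place \(P\) of \(F\) above such a \(Q_a\), let \(e\) be the ramification index. Then \(v_P(B(u))=e\), and the defining equation gives \((q^n+1)v_P(z)=er\). Hence
\[
 v_P(zv^i)=\frac{er}{q^n+1}\,(1+ij)-ie
 =\frac{e}{q^n+1}\bigl(r+i(jr-q^n-1)\bigr)
 =\frac{e}{q^n+1}\bigl(r-i(q+1)\bigr),
\]
using \(q+1+(q^2-q)r=q^n+1\). This is \(\ge0\) exactly when \(i\le r/(q+1)\), which holds for \(i\le\ell\) by \eqref{b1:ell}. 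At every other finite place, \(z\) has nonnegative valuation and \(B(u)\) is a unit, so \(zv^i\) is regular there.

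Next I handle \(P_\infty\). There \(v_{P_\infty}(B(u))=-(q^2-q)N\), because \(\deg B=q^2-q\) and \(v_{P_\infty}(u)=-N\). Also \(v_{P_\infty}(z^j)=-jq^n\). Therefore
\[
 v_{P_\infty}(v)=(q^2-q)(N-q^n)=q^2-q>0,
\]
and \(v_{P_\infty}(zv^i)=-q^n+i(q^2-q)\ge-q^n>-N\). So every \(zv^i\) lies in \(\mathcal L(NP_\infty)\).

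Finally, \(v\) is nonconstant because it has a zero at \(P_\infty\) and \(v\ne0\). The fact \(\ell\ge1\) (noted after \eqref{b1:ell}) is not needed for membership. The only delicate step is the valuation bookkeeping above the zeros of \(B\), which must cover both parities of \(n\); the identity \(jr=q^n+1-(q+1)\) makes this uniform.
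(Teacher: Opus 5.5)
Your proof is correct and follows essentially the same route as the paper. Both compute the valuations of $zv^i$ at $P_\infty$ and at the places above the zeros of $B(u)$, and note regularity at all other finite places; your uniform formula $\frac{e}{N}\bigl(r-i(q+1)\bigr)$ is the paper's $r/\varepsilon-i(q+1)/\varepsilon$ with $e=N/\varepsilon$. The only difference is that you deduce nonconstancy of $v$ from its zero at $P_\infty$, whereas the paper uses its pole above $Q_a$, $a\in\F_{q^2}\setminus\F_q$; both arguments work.
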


\begin{proof}
By Lemma~\ref{b1:ramification}(\ref{b1:infinite-place}), the unique pole of \(u\) is
\(P_\infty\), and
\(\operatorname{ord}_{P_\infty}(u)=-N\),
\(\operatorname{ord}_{P_\infty}(z)=-q^n\).  Hence
\(\operatorname{ord}_{P_\infty}(v)=j\).
Put \(\varepsilon=\gcd(r,N)\). At every place of \(F\) above
\(Q_a\), with \(a\in\F_{q^2}\setminus\F_q\),
the orders of \(z\) and \(v\) are \(r/\varepsilon\) and
\(-(q+1)/\varepsilon\), respectively.  Since
\(r=(q+1)\ell\) when \(n\) is odd and
\(r=(q+1)\ell+1\) when \(n\) is even, the functions
\(zv^i\), \(0\le i\le\ell\), are regular at these places.
At every place of \(F\) above \(Q_a\), with \(a\in\F_q\), the orders of \(z\) and
\(v\) are \(1\) and \(j\), and there are no other finite poles.
This proves \eqref{b1:function-list}. The function \(v\) has a pole
above \(Q_a\) for every \(a\in\F_{q^2}\setminus\F_q\), so it is nonconstant.
\end{proof}

If a covering by \(\mathcal H_{q^n}\) exists, the following lemma
allows us to express the pullbacks of the functions in
\eqref{b1:function-list} as ratios of homogeneous polynomials.

\Needspace{8\baselineskip}
\begin{lemma}
\label{b1:polynomial-representation}
Let \(D\) be an effective \(k\)-rational divisor of degree
\(1\le d<q^n\) on \(\mathcal H_{q^n}:b^{q^n}+b=a^N\).
Let \(1,f_1,\ldots,f_s\) be nonzero functions in
\(\mathcal L(ND)\), computed over \(\overline{k}\).
\begin{enumerate}
\item\label{b1:homogeneous-representation}
There are homogeneous polynomials
\(F_0,\ldots,F_s\in\overline{k}[T_1,T_2,T_3]\) of degree \(d\)
such that
\(f_i=F_i(a,b,1)/F_0(a,b,1)\).
\item\label{b1:coprime-representation}
If \((f_1)_\infty=ND\), the polynomials in
\textup{(\ref{b1:homogeneous-representation})} can be chosen with
\(\gcd(F_0,F_1)=1\).
\item\label{b1:low-degree-nonvanishing}
No nonzero homogeneous polynomial of degree less than \(N\)
vanishes identically on \(\mathcal H_{q^n}\).
\end{enumerate}
\end{lemma}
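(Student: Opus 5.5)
The plan is to use two facts: $\mathcal H_{q^n}$ is a nonsingular plane curve of degree $N=q^n+1$, and on it $ND$ is linearly equivalent to $d$ times the line section $H$. We prove (\ref{b1:low-degree-nonvanishing}) first, since (\ref{b1:coprime-representation}) needs it. The projective closure of $\mathcal H_{q^n}$ is given by the homogeneous polynomial $G=T_2^{q^n}T_3+T_2T_3^{q^n}-T_1^N$. This polynomial is absolutely irreducible, because $\mathcal H_{q^n}$ is geometrically irreducible. A homogeneous polynomial vanishing identically on the curve is therefore divisible by $G$. So it is zero or has degree at least $N$.

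Next we show $ND\sim dH$, where $H$ is the divisor cut out by $T_3=0$, namely $H=NP_\infty$. Write $l=q^n$, so $k=\F_{l^2}$ and $\mathcal H_{q^n}$ is $k$-maximal. By the standard property of maximal curves, the $l^2$-Frobenius $\Phi$ acts on the Jacobian as multiplication by $-l$. Fix the $k$-rational point $P_0=P_\infty$ and put $D_0=D-dP_0$. Since $D$ is $k$-rational, $\Phi(D_0)=D_0$. Hence $(l+1)D_0$ is principal, that is, $ND\sim dNP_\infty=dH$. I expect this step to be the main obstacle. The difficulty is that the points of $D$ need not be $k$-rational individually, so the argument has to go through the Frobenius action on $\operatorname{Pic}^0$, and I would cite the fundamental equation for maximal curves for that action. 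Alternatively, one can use $lP+\Phi(P)\sim H$ for every point $P$, which comes from the tangent line at $P$, and sum this over the closed points in $D$.

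For (\ref{b1:homogeneous-representation}) we use projective normality of plane curves. For a nonsingular plane curve of degree $N$, the complete linear system $|dH|$ is cut out by homogeneous forms of degree $d$, with no residual. Since $ND\in|dH|$, there is a form $F_0$ of degree $d$ whose intersection divisor with the curve is exactly $ND$. Now take $f\in\mathcal L(ND)$. Then $f\cdot F_0/T_3^d$ lies in $\mathcal L(dH)$, so it equals $F/T_3^d$ for some form $F$ of degree $d$. Hence $f=F(a,b,1)/F_0(a,b,1)$. Applying this to $1,f_1,\dots,f_s$ gives $F_0,F_1,\dots,F_s$, where the form for $f=1$ is $F_0$ itself.

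For (\ref{b1:coprime-representation}), suppose $(f_1)_\infty=ND$. The intersection divisor of $F_1$ with the curve is $(f_1)+ND=(f_1)_0$. This divisor has support disjoint from that of $ND$, which is the intersection divisor of $F_0$. Suppose $F_0$ and $F_1$ had a nonconstant common factor $E$. Since $\deg E\le d<q^n<N$, part (\ref{b1:low-degree-nonvanishing}) shows that $E$ does not vanish identically on the curve. By Bézout, $E$ then meets the curve in some point. That point lies in the support of both intersection divisors, a contradiction. Hence $\gcd(F_0,F_1)=1$, and this holds for the $F_0,F_1$ constructed above without any further change.
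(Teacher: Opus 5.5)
Your proposal is correct. It shares the paper's one nontrivial input, the linear equivalence $ND\sim dNO$. Your fallback, summing $q^nP+\Phi(P)\sim H$ over the points of $D$ and using $\Phi(D)=D$, is exactly the paper's argument, and the Jacobian formulation ($\Phi=-q^n$ on $\operatorname{Pic}^0$) is an equivalent way of saying it, so this step is not the obstacle you feared.

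After that step the two routes genuinely diverge.
\begin{enumerate}
\item The paper stays inside the function field and derives all three parts from the explicit Weierstrass semigroup $\langle q^n,N\rangle$ at $O$. The monomials $a^ib^t$ with $i\le q^n$ have pairwise distinct pole orders. This yields a monomial basis of $\mathcal L(dNO)$, so (1) follows by homogenizing $h,hf_1,\dots,hf_s$ with $(h)=ND-dNO$. It gives (3) at once for total degree $<N$. It gives (2) through the effective divisors $E_G=(G(a,b,1))+N(\deg G)O$, which are additive under multiplication. A common factor of $F_0,F_1$ would therefore give a nonzero effective divisor lying under both $(f_1)_\infty$ and $(f_1)_0$.
\item You instead import classical plane-curve geometry. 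For (3) you use irreducibility of $G$ with the Nullstellensatz. For (1) you use projective normality of a smooth plane curve, i.e.\ surjectivity of $H^0(\PP^2,\mathcal O(d))\to H^0(C,\mathcal O_C(d))$, which follows from $H^1(\PP^2,\mathcal O(d-N))=0$. For (2) you use B\'ezout.
\end{enumerate}

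Your version is shorter and more conceptual. It does not need $d<q^n$ for (1), nor really for (2), since $E\mid F_0$ already keeps $E$ from vanishing on the curve. Its cost is reliance on general theorems and on the plane model being nonsingular. You assert nonsingularity but should verify it: the partials of $G$ are $-T_1^{q^n}$, $T_3^{q^n}$, $T_2^{q^n}$, which have no common projective zero. This also gives irreducibility of $G$ directly, without appealing to the geometric irreducibility of $\mathcal H_{q^n}$. The paper's version is self-contained in Stichtenoth's language and in effect proves the needed case of projective normality by hand.
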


\begin{proof}
Let \(O\) be the unique place at infinity of \(\mathcal H_{q^n}\).
By \cite[Corollary~1.2]{FGT1997},
\(q^nP+\operatorname{Fr}(P)\sim NO\) for every geometric point
\(P\), where \(\operatorname{Fr}\) is the \(q^{2n}\)-power
Frobenius. Summing over \(D\), and using
\(\operatorname{Fr}(D)=D\), gives
\begin{equation}
\label{b1:divisor-equivalence}
 ND\sim dNO.
\end{equation}
Choose a function \(h\) with \((h)=ND-dNO\). Then
\(h,hf_1,\ldots,hf_s\in\mathcal L(dNO)\).

The Weierstrass semigroup at \(O\) is \(\langle q^n,N\rangle\):
these are the pole orders of \(a,b\), and the semigroup they generate
has \(q^n(q^n-1)/2\) gaps, equal to the genus of \(\mathcal H_{q^n}\).
Every nongap at \(O\) has a unique expression \(iq^n+tN\) with
\(0\le i\le q^n\) and \(t\ge0\). Thus \(\mathcal L(dNO)\) has a basis consisting of the
monomials \(a^i b^t\) with \(0\le i\le q^n\), \(t\ge0\), and
\(iq^n+tN\le dN\): their pole orders are pairwise distinct and
exhaust the nongaps not exceeding \(dN\).
Since \(d<q^n\), the inequality \(i+t>d\) would imply
\(iq^n+tN\ge(d+1)q^n>dN\). Homogenizing the resulting expressions
for \(h,hf_1,\ldots,hf_s\) therefore gives the required polynomials.
This proves \textup{(\ref{b1:homogeneous-representation})}.

For \textup{(\ref{b1:low-degree-nonvanishing})}, let \(G\) be a nonzero homogeneous
polynomial with \(\deg G<N\). The monomials in \(G(a,b,1)\) have
distinct pole orders at \(O\), so \(G(a,b,1)\ne0\).

To prove \textup{(\ref{b1:coprime-representation})}, put
\(E_G=(G(a,b,1))+N(\deg G)O\) for each such polynomial \(G\).
Each monomial has pole order at most \(N\deg G\), so \(E_G\) is
effective of degree \(N\deg G\).
These divisors satisfy \(E_{GH}=E_G+E_H\)
whenever \(\deg(GH)<N\).
If \((f_1)_\infty=ND\), the choice of \(h\) and the homogenization give
\[
 \begin{aligned}
 E_{F_0}&=(h)+dNO=ND=(f_1)_\infty,\\
 E_{F_1}&=(hf_1)+dNO=(f_1)_0.
 \end{aligned}
\]
These two divisors have disjoint supports. A common factor
\(G\) of positive degree would give a nonzero effective divisor
\(E_G\) contained in both, a contradiction.
\end{proof}

\Needspace{6\baselineskip}
\begin{theorem}
\label{b1:noncover}
Let \(q>2\) and \(n\ge3\). The curve \(\mathcal X_{q,n}\) is not
covered by the Hermitian curve \(\mathcal H_{q^n}\) over \(\F_{q^{2n}}\);
that is, there is no nonconstant \(\F_{q^{2n}}\)-morphism
\(\mathcal H_{q^n}\to\mathcal X_{q,n}\).
\end{theorem}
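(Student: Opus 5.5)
Suppose, for a contradiction, that \(\varphi\colon\mathcal H_{q^n}\to\mathcal X_{q,n}\) is a nonconstant \(k\)-morphism, of degree \(d\). The plan is to pull back the functions of Lemma~\ref{b1:pole-functions} along \(\varphi\) and represent them by homogeneous polynomials via Lemma~\ref{b1:polynomial-representation}. The relation \(v=z^j/B(u)\) then becomes a polynomial identity of low degree, and comparing the multiplicity of an irreducible factor of \(B\) on the two sides should give the contradiction.

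\emph{Step 1: bounding \(d\).} By the Riemann--Hurwitz inequality \(2g(\mathcal H_{q^n})-2\ge d\,(2g(\mathcal X_{q,n})-2)\), together with Proposition~\ref{b1:genus}, we get
\[
 q^n(q^n-1)-2\ \ge\ d\bigl((q^2-1)q^n-q^2(q-1)-2\bigr)
\]
in the worst (odd) case. Hence \(d\) is roughly at most \(q^n/(q^2-1)\). In particular \(d<q^n\), and moreover \(d(q^2-q+1)<N\). Here \(q>2\) should make these inequalities comfortably strict.

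\emph{Step 2: the divisor \(D\) and the representation.} Put \(D=\varphi^*(P_\infty)\). This is an effective \(k\)-rational divisor of degree \(d\). By Lemma~\ref{b1:ramification}(\ref{b1:infinite-place}), the pullback \(f_1=\varphi^*u\) has pole divisor exactly \(ND\), and \(\varphi^*z,\varphi^*(zv^i)\in\mathcal L(ND)\). Lemma~\ref{b1:polynomial-representation} then gives homogeneous polynomials \(F_0,U,Z=Z_0,Z_1,\dots,Z_\ell\) of degree \(d\) with \(\gcd(F_0,U)=1\), representing \(1,u,z,zv,\dots,zv^\ell\).

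\emph{Step 3: the polynomial identity.} Write \(\widetilde B(U,F_0)=\prod_{\theta\in\F_{q^2}\setminus\F_q}(U-\theta F_0)\) for the homogenization of \(B\). Its factors are pairwise coprime because \(\gcd(U,F_0)=1\). The identity \(zv^{i+1}B(u)=z^j\cdot zv^i\) becomes
\[
 Z_{i+1}\,\widetilde B(U,F_0)=Z^j Z_i,
\]
an equation between forms of degree \(d(j+1)<N\). By Lemma~\ref{b1:polynomial-representation}(\ref{b1:low-degree-nonvanishing}), this holds as an identity of polynomials.

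\emph{Step 4: the multiplicity count (main obstacle).} Fix \(\theta\) and an irreducible factor \(G\) of \(U-\theta F_0\). Let \(\mu\ge1\) be its multiplicity there, and \(e_i\) its multiplicity in \(Z_i\), with \(e=e_0\). Step~3 gives the recursion \(e_{i+1}=e_i+je-\mu\). Iterating,
\[
 e_\ell=e(1+\ell j)-\ell\mu .
\]
The difficulty is a second, independent relation between \(e\) and \(\mu\). I would get it from the divisors \(E_G\) in the proof of Lemma~\ref{b1:polynomial-representation}. At a place of \(\mathcal H_{q^n}\) over a place of \(F\) above \(Q_\theta\), the orders of \(\varphi^*z\) and \(\varphi^*(u-\theta)\) are in the fixed ratio \(r/N\), by Lemma~\ref{b1:ramification}. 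Since \(E_Z\) and \(E_{U-\theta F_0}\) are the pullbacks of the zero divisors of \(z\) and \(u-\theta\) near these points, this should force \(e\approx r\mu/N\), up to the contribution of other zeros of \(z\), which lie only above \(\F_q\) and are coprime to \(G\). Substituting gives \(e(1+\ell j)-\ell\mu\) of size about \(\mu(r(1+\ell j)/N-\ell)\), which is negative using \(r\approx(q+1)\ell\) and \(j=q^2-q\). This contradicts \(e_\ell\ge0\). Should this pointwise comparison fail to be tight, the fallback is to sum over all \(\theta\) and \(G\) (weighted by \(\deg G\)) and compare with \(\deg Z_\ell=d\) and \(\deg\widetilde B=(q^2-q)d\). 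I expect pinning down this second relation between \(e\) and \(\mu\), in both parities of \(n\), to be the crux.
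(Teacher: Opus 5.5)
Steps 1--3 follow the paper's proof closely, but Step~4, which you rightly call the crux, does not go through. Even granting your heuristic $e=r\mu/N$, the identity $N=jr+q+1$ gives
\[
 e_\ell=e(1+\ell j)-\ell\mu=\frac{\mu}{N}\bigl(r-(q+1)\ell\bigr).
\]
This is $0$ for odd $n$ and $\mu/N>0$ for even $n$, so it is never negative. That is unavoidable: by Lemma~\ref{b1:pole-functions}, the functions $zv^i$ with $i\le\ell$ really are regular above $Q_\theta$. Bookkeeping with orders of pulled-back functions, via the divisors $E_G$, is therefore consistent and cannot by itself give a contradiction. Moreover, $e=r\mu/N$ does not hold factor by factor, because $E_G$ and $E_H$ for distinct irreducible forms may share points of $\mathcal H_{q^n}$. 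Only the degree inequality $\sum_G e_G\deg G\le (r/N)\sum_G\mu_G\deg G$ over the factors of $U-\theta F_0$ survives. Summing over $\theta$ as in your fallback gives $\sum_G e_\ell(G)\deg G\le jd\,\bigl(r-(q+1)\ell\bigr)/N$, which is again nonnegative.

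The missing ingredient is integrality of polynomial multiplicities, combined with $\mu\le\deg(U-\theta F_0)=d$. In your notation, suppose some irreducible factor $G$ of $\widetilde B$ has $\delta=\mu-je\ge1$. Then $e_i=e-i\delta$, so $e_\ell\ge0$ gives $e\ge\ell\delta$, and hence $\mu=je+\delta\ge j\ell+1$. But $d<N/(j+1)\le r\le(q+1)\ell+1<j\ell+1$, and the last inequality is where $q>2$ enters; this contradicts $\mu\le d$. Such a $G$ exists because $v=Z_1/Z_0$ is nonconstant while $\deg Z_0=\deg Z_1$. So some irreducible $\rho$ has strictly smaller multiplicity in $Z_1$ than in $Z_0$, and $Z_1\widetilde B=Z_0^{j+1}$ forces $\rho\mid\widetilde B$. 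Alternatively, your degree inequality yields a factor with $Ne\le r\mu$, hence $\delta\ge\mu(q+1)/N>0$. This integrality argument is the paper's proof. Separately, Step~1 uses the Riemann--Hurwitz inequality, which holds only for separable $\varphi$. The inseparable case needs its own reduction, for instance via the $k$-isomorphism $k(a,b)\cong k(a^{p^s},b^{p^s})$ used in the paper.
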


\begin{proof}
Suppose first that there is a separable \(k\)-morphism
\(\pi:\mathcal H_{q^n}\to\mathcal X_{q,n}\) of degree \(d\).
The Hurwitz formula gives
\begin{equation}
\label{b1:hurwitz-bound}
 d(2g(\mathcal X_{q,n})-2)\le q^{2n}-q^n-2.
\end{equation}
Equation \eqref{b1:genus-formula} gives
\[
 2g(\mathcal X_{q,n})-2-(j+1)(q^n-2)
 \ge(q-2)(q^n-q^2+q)>0.
\]
Together with \eqref{b1:hurwitz-bound}, this yields
\begin{equation}
\label{b1:polynomial-degree-bound}
 (j+1)d<N.
\end{equation}
In particular, \(d<q^n\).
Since \(N=jr+q+1\), \(r\ge q+1\), and
\(r\le(q+1)\ell+1\), we obtain
\begin{equation}
\label{b1:cover-degree}
 d<\frac{N}{j+1}\le r\le(q+1)\ell+1<j\ell+1,
\end{equation}
where the last inequality uses \(q>2\), hence \(j>q+1\), and
\(\ell\ge1\).

Put \(D=\pi^*P_\infty\). This is an effective \(k\)-rational
divisor of degree \(d\). The pullbacks of \eqref{b1:function-list}
belong to \(\mathcal L(ND)\), and the pullback of \(u\) has pole
divisor \(ND\). By Lemma~\ref{b1:polynomial-representation}(\ref{b1:homogeneous-representation})--(\ref{b1:coprime-representation}), they
are represented by nonzero homogeneous polynomials
\(X_0,X_1,Y_0,\ldots,Y_\ell\) of degree \(d\) over \(\overline{k}\),
with \(\gcd(X_0,X_1)=1\). Their restrictions to \(\mathcal H_{q^n}\)
satisfy \(u=X_1/X_0\) and \(zv^i=Y_i/X_0\), where we suppress
the pullback notation.

On \(\mathcal H_{q^n}\), the polynomials satisfy
\begin{equation}
\label{b1:quadratic-identities}
 Y_0Y_{i+1}=Y_1Y_i,\qquad 0\le i<\ell.
\end{equation}
Since
\(B(u)=\prod_{\alpha\in\F_{q^2}\setminus\F_q}(u-\alpha)\),
the relation \(vB(u)=z^j\) also gives
\begin{equation}
\label{b1:factor-identity}
 Y_1\prod_{\alpha\in\F_{q^2}\setminus\F_q}
 (X_1-\alpha X_0)=Y_0^{j+1}.
\end{equation}
The homogeneous equations \eqref{b1:quadratic-identities} and
\eqref{b1:factor-identity} have degrees \(2d\) and \((j+1)d\),
respectively. Both are less than \(N\) by
\eqref{b1:polynomial-degree-bound}. Hence
Lemma~\ref{b1:polynomial-representation}(\ref{b1:low-degree-nonvanishing}) shows that these equations
are polynomial identities.

The restriction of \(Y_1/Y_0\) is the pullback of the nonconstant
function \(v\). As \(Y_0,Y_1\) have the same degree,
\(Y_0\) does not divide \(Y_1\). Choose an irreducible factor
\(\rho\) whose multiplicities in \(Y_0,Y_1\) are \(a_0\) and
\(a_0-e\), respectively, with \(e>0\).
By \eqref{b1:quadratic-identities}, the multiplicity of \(\rho\)
in \(Y_i\) is \(a_0-ie\). Since the multiplicity \(a_0-\ell e\) of
\(\rho\) in \(Y_\ell\) is nonnegative, \(a_0\ge\ell e\).
By \eqref{b1:factor-identity}, the multiplicity of \(\rho\) in
\(\prod_\alpha(X_1-\alpha X_0)\) is
\(ja_0+e\ge(j\ell+1)e>d\), where the last inequality follows
from \eqref{b1:cover-degree}.
However, \(\gcd(X_0,X_1)=1\) makes the factors
\(X_1-\alpha X_0\) pairwise relatively prime.
Thus \(\rho\) divides at most one of them and has multiplicity
at most \(d\), a contradiction.

Finally, suppose that an inseparable \(k\)-morphism exists, and
view \(F\) as a subfield of \(k(a,b)\) through its pullback.
Let \(E\) be the maximal intermediate field separable over \(F\).
Then \(k(a,b)/E\) is purely inseparable of degree \(p^s\) for
some \(s\ge1\). Since \(k\) is perfect,
\cite[Proposition~3.10.2(c)]{Stichtenoth2009} gives
\(E=k(a,b)^{p^s}=k(a^{p^s},b^{p^s})\).
The equation \(b^{q^n}+b=a^{q^n+1}\) has
coefficients in \(\F_p\), so the substitutions
\(a\mapsto a^{p^s}\), \(b\mapsto b^{p^s}\), fixing \(k\),
give a \(k\)-isomorphism from \(k(a,b)\) onto \(E\).
This would give a separable \(k\)-morphism
\(\mathcal H_{q^n}\to\mathcal X_{q,n}\), a contradiction.
\end{proof}

\begin{remark}
\label{b1:q2-cover}
For \(q=2\), the curve \(\mathcal X_{2,n}\) is a cyclic Galois
subcover of \(\mathcal H_{2^n}\) for every \(n\ge2\).
For odd \(n\), a cyclic Galois covering of degree \((2^n+1)/3\)
follows from Proposition~\ref{b1:bm-identification} and the proof of
\cite[Lemma~2.4]{BM2018}.

Suppose that \(n\) is even. We take \(c=1\), since the
\(k\)-isomorphism class is independent of \(c\), and choose
\(\omega\in\F_4\setminus\F_2\). Put \(d=(2^n-1)/3\) and use
the model \(y^{2^n+1}=x^{2^n}+x\) for \(\mathcal H_{2^n}\).
A covering of degree \(d\) is given by
\begin{equation}
\label{b1:q2-even-map}
 u=\frac{x^d+1}{\omega x^d+\omega^2},\qquad
 z=\frac{yx^{(d-1)/2}}{\omega x^d+\omega^2}.
\end{equation}
Put \(t=x^d\). Using \(y^{2^n+1}=x^{2^n}+x\),
\(3d=2^n-1\), and \(r=2^{n-1}-1\), we obtain
\[
 z^{2^n+1}=\frac{t^r(t^3+1)}{(\omega t+\omega^2)^{2^n+1}}
 =A(u)B(u)^r.
\]

Since \(k(u)=k(t)\) and \(k(x,y)=k(u,z)(x)\), the relation
\(x^d=t\) shows that the covering has degree at most \(d\).
As \(d\mid2^n-1\), the maps
\((x,y)\mapsto(\zeta^2x,\zeta y)\), with
\(\zeta\in k\) and \(\zeta^d=1\), form a cyclic group of order
\(d\) fixing \(u,z\). Hence the covering is cyclic Galois of
degree \(d\).
For \(n=2\), formula~\eqref{b1:q2-even-map} gives an isomorphism.
\end{remark}

\Needspace{10\baselineskip}
\section{An explicit automorphism subgroup}
\label{b1:automorphisms-section}

For both odd and even \(n\), equation~\eqref{b1:mobius-f} allows us
to extend fractional linear transformations of \(u\) over \(\F_q\)
to automorphisms of \(F\).
By Theorem~\ref{b1:maximality} and \cite[Theorem~3.10]{GSY2015},
all automorphisms of \(\mathcal X_{q,n}\) are defined over \(k\).
For odd \(n\ge5\), the full automorphism group was determined in
\cite[Theorem~4.3]{BM2018}.

\begin{proposition}
\label{b1:automorphism-subgroup}
Assume that \(n\ge4\). For
\(M=\left(\begin{smallmatrix}\alpha&\beta\\\gamma&\delta\end{smallmatrix}\right)
\in\GL_2(q)\) and \(\lambda\in k^*\) with \(\lambda^{q^n+1}=\det M\),
the assignments
\begin{equation}
\label{b1:automorphism-action}
 \sigma_{M,\lambda}(u)=\frac{\alpha u+\beta}{\gamma u+\delta},\qquad
 \sigma_{M,\lambda}(z)=\frac{\lambda z}{\gamma u+\delta}
\end{equation}
define \(k\)-automorphisms of \(F=k(u,z)\). These automorphisms
form a subgroup of \(\Aut(\mathcal X_{q,n})\) of order
\((q^n+1)q(q^2-1)\).
\end{proposition}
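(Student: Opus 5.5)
We first check that \eqref{b1:automorphism-action} defines a $k$-endomorphism of $F$. Since $F=k(u,z)$ is determined by the single relation $z^{q^n+1}=f(u)$, it suffices to verify that $\sigma(z)^{q^n+1}=f(\sigma(u))$. By \eqref{b1:mobius-f} of Lemma~\ref{b1:mobius-identities},
\[
 \Bigl(\frac{\lambda z}{\gamma u+\delta}\Bigr)^{q^n+1}
 =\frac{\det(M)f(u)}{(\gamma u+\delta)^{q^n+1}}=f(M(u)).
\]
So the assignment extends to a $k$-embedding $F\to F$. Its image contains $M(u)$, hence contains $u$, since $M$ is invertible. It also contains $\lambda z/(\gamma u+\delta)$, hence contains $z$. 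The embedding is therefore surjective, and $\sigma_{M,\lambda}$ is an automorphism.

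Next we show the set $G$ of all $\sigma_{M,\lambda}$ is a subgroup and compute the composition law. The composite $\sigma_{M,\lambda}\circ\sigma_{M',\lambda'}$ sends $u$ to $(M'M)(u)$ or $(MM')(u)$, depending on the action convention. It sends $z$ to $\lambda\lambda'z$ divided by the product of the two denominators, and that product equals the denominator attached to the product matrix. The determinant condition is multiplicative, so $G$ is closed under composition; it contains the identity $\sigma_{I,1}$ and is finite, hence a group. Concretely, the pairs $(M,\lambda)$ with $\lambda^{q^n+1}=\det M$ form a group $\widetilde G$ under $(M,\lambda)(M',\lambda')=(MM',\lambda\lambda')$, and $(M,\lambda)\mapsto\sigma_{M,\lambda}$ is a homomorphism or anti-homomorphism onto $G$. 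For each $M\in\GL_2(q)$, the norm $k^*\to\F_{q^n}^*$ is surjective and $\det M\in\F_q^*\subseteq\F_{q^n}^*$, so there are exactly $q^n+1$ choices of $\lambda$. Hence $|\widetilde G|=(q^n+1)|\GL_2(q)|=(q^n+1)q(q-1)(q^2-1)$.

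It remains to compute the kernel. Suppose $\sigma_{M,\lambda}=\mathrm{id}$. Then $M(u)=u$, which forces $M$ to be a scalar matrix $\mu I$ with $\mu\in\F_q^*$. Then $\sigma(z)=\lambda z/\mu=z$ gives $\lambda=\mu$, and the determinant condition becomes $\mu^{q^n+1}=\mu^2$. Since $\mu\in\F_q$, we have $\mu^{q^n}=\mu$, so $\mu^{q^n+1}=\mu^2$ automatically. Thus the kernel is $\{(\mu I,\mu):\mu\in\F_q^*\}$, of order $q-1$, and
\[
 |G|=(q^n+1)q(q^2-1).
\]

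The main care point is the role of $n\ge4$. The construction itself works for all $n$; the hypothesis presumably only serves to distinguish this group from the larger automorphism groups occurring for $n=2,3$. We should also make sure that no nonscalar $M$ acts trivially on $u$, which holds because the action of $\PGL_2(q)$ on $k(u)$ is faithful. Beyond that, the only real content is the kernel computation above.
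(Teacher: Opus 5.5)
Your proof is correct and follows essentially the same route as the paper. You verify the defining relation via \eqref{b1:mobius-f} and use the composition law; with the given convention this is $\sigma_{M,\lambda}\circ\sigma_{M',\lambda'}=\sigma_{M'M,\lambda'\lambda}$, so your map is an anti-homomorphism. You then count $q^n+1$ choices of $\lambda$ for each $M$ and divide by the $q-1$ pairs $(tM,t\lambda)$, $t\in\F_q^*$, that give the same automorphism.

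Your kernel computation is the paper's fiber count phrased group-theoretically. You are also right that the hypothesis $n\ge4$ plays no role in the argument.
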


\begin{proof}
Equations~\eqref{b1:model} and \eqref{b1:mobius-f} give
\[
 \left(\frac{\lambda z}{\gamma u+\delta}\right)^{q^n+1}
 =\frac{\det(M)f(u)}{(\gamma u+\delta)^{q^n+1}}
 =f\!\left(\frac{\alpha u+\beta}{\gamma u+\delta}\right).
\]
Thus \eqref{b1:automorphism-action} preserves \eqref{b1:model}.
Composition gives
\(\sigma_{M,\lambda}\circ\sigma_{M',\lambda'}
=\sigma_{M'M,\lambda'\lambda}\).
The matrix order is reversed because the maps act on functions
by substitution. Their inverses are
\(\sigma_{M^{-1},\lambda^{-1}}\), so they form a subgroup.

The map \(\lambda\mapsto\lambda^{q^n+1}\) is the norm from \(k\) to
\(\F_{q^n}\), so each \(M\) admits exactly \(q^n+1\) choices of
\(\lambda\). If two pairs define the same automorphism, comparison
of their images of \(u\) gives \(M'=tM\) for some \(t\in\F_q^*\).
Comparison of their images of \(z\) then gives
\(\lambda'=t\lambda\). Conversely, \((tM,t\lambda)\) defines the
same automorphism, and
\((t\lambda)^{q^n+1}=t^2\det M=\det(tM)\).
Thus each automorphism is represented by exactly \(q-1\) pairs,
and the subgroup has order
\((q^n+1)|\GL_2(q)|/(q-1)=(q^n+1)q(q^2-1)\).
\end{proof}

\section*{Declaration on the use of AI}

The new family of curves was discovered through extensive searches
carried out in collaboration with OpenAI's GPT-6 Astra.
Generative AI was also used to assist with mathematical calculations,
the development and checking of proofs, and the revision of the
manuscript. The authors take responsibility for the mathematical
content and the final text.

\Needspace{24\baselineskip}

\end{document}